\documentclass[11pt,a4paper]{amsart}

\setlength{\textheight}{22.4cm}
\setlength{\topmargin}{1mm}
\setlength{\textwidth}{13.3cm}
\setlength{\oddsidemargin}{15mm}
\setlength{\evensidemargin}{15mm}

\usepackage{amsmath,amssymb}
\usepackage{amsthm}
\usepackage{bm}
\usepackage{graphicx}
\newtheorem{definition}{Definition}[section]
  \newtheorem{theorem}[definition]{Theorem}
  \newtheorem{lemma}[definition]{Lemma}
  \newtheorem{corollary}[definition]{Corollary}
  \newtheorem{proposition}[definition]{Proposition}

\title{An estimation of Hempel distance by using\\ reeb graph}
\author{Ayako Ido}
\date{}
\thanks{Department of Mathematics, Nara Women's University, Nara, 630-8506, Japan. E-mail address: eaa.ido@cc.nara-wu.ac.jp}

\begin{document}
\maketitle

\begin{abstract}
Let $P, Q$ be Heegaard surfaces of a closed orientable 3-manifold. 
In this paper, we introduce a method for giving an upper bound of (Hempel) distance of $P$ by using the Reeb graph derived from a certain horizontal arc in the ambient space $[0,1]\times[0,1]$ of the Rubinstein-Scharlemann graphic derived from $P$ and $Q$. This is a refinement of a part of Johnson's arguments used for determining stable genera required for flipping high distance Heegaard splittings. 
 
\end{abstract}

\section{Introduction}	
Hempel \cite{He} introduced the concept of \textit{distance} of a Heegaard splitting, and it is shown by many authors that it well represents various complexities of 3-manifolds. For example, Scharlemann-Tomova shows that high distance Heegaard splittings are \lq\lq{rigid}\rq\rq. More precisely:

\medskip
\noindent
\textbf{Theorem}
\textit{(Corollary 4.5 of \cite{ST})
If a compact orientable 3-manifold $M$ has a genus $g$ Heegaard
surface $P$ with $d(P)>2g$, then}
\begin{itemize}
\item \textit{$P$ is a minimal genus Heegaard surface of $M$};

\item \textit{any other Heegaard surface of the same genus is isotopic to $P$}.
\end{itemize}
\textit{Moreover, any Heegaard surface $Q$ of $M$ with $2g(Q){\leq}d(P)$ is isotopic to a stabilization or boundary stabilization of $P$}.

\medskip

The above result is proved by using \textit{Rubinstein-Scharlemann graphic} (or \textit{graphic} for short). Graphic is introduced by Rubinstein-Scharlemann for studying Reidemeister-Singer distance of two strongly irreducible Heegaard splittings.
In \cite{KS}, Kobayashi-Saeki show that graphics for 3-manifolds can be regarded as the images of the discriminant sets of stable maps from the 3-manifolds into the plane $[0,1]\times[0,1]$, and as an application, they give an example (Corollary 5.7 of \cite{KS}) of a pair of Heegaard splittings such that a common stabilization of them can be observed as an arc in the ambient space $[0,1]\times[0,1]$ of the graphic. 
This approach is formulated in general setting by Johnson \cite{J1}, to give an estimation of the stable genera from above. 
He further developed the idea, and succeeded to determine stable genera required for flipping high distance Heegaard splittings \cite{J2}. (We note that this result is first proved by Hass, Thompson and Thurston \cite{HTT}.) 
One of the tools used in \cite{J2} is \textit{horizontal arcs} disjoint from \textit{mostly above regions} and \textit{mostly below regions} (for the definitions, see Section~5) in the ambient space of the graphic. 
By using such arcs, Johnson gives an estimation of distances of Heegaard splittings, which implies an alternative proof of the above result of Scharlemann-Tomova's.

In this paper, we give a more detailed treatment of such horizontal arcs, which can possibly give a better estimation of the distance.
In fact, given two strongly irreducible Heegaard splittings, we show that there exists a horizontal arc in the ambient space of the graphic derived from them, which is disjoint from $R_{X}\cup{R_{x}}$ and $R_{Y}\cup{R_{y}}$ (for the definitions, see Section~4). We show that there exists a subinterval of the horizontal arc whose interior is contained in unlabelled regions and adjacent to an $A$-region and a $B$-region. Then we give the definition of \textit{Reeb graph} $G$ derived from the horizontal arc. We consider the subgraph $G^{*}$ of $G$ corresponding to the above subinterval. Then we consider the subset $G^{*}_{e}$ consisting of edges corresponding to essential simple closed curves on $Q$. In Section~7, we introduce a method of assigning a positive integer to each edge of $G^{*}_{e}$. Then we have the following: 

\medskip
\noindent
\textbf{Theorem \ref{main}}
\textit{
Let $P, Q$ and $G^{*}_{e}$ be as above. Let $n$ be the minimum of the integers assigned to the edges adjacent to $\partial_{+}G^{*}_{e}$. 
Then the distance $d(P)$ is at most $n + 1$.}

\section{Preliminaries}

\noindent
\textbf{2.1. Heegaard splittings.}
A genus $g(\geq{1})$ \textit{handlebody} $H$ is the boundary sum of 
$g$ copies of a solid torus. 
Note that $H$ is homeomorphic to the closure of a regular neighborhood 
of some finite graph $\Sigma$ in $\mathbb{R}^{3}$. 
The image $\Sigma$ of the graph is called a \textit{spine} of $H$. 
By a technical reason, throughout this paper, we suppose that each vertex of spines of genus $g(>1)$ handlebodies is of valency three (for a detailed discussion see \cite{KS}, Sect.2). 
Let $M$ be a closed orientable 3-manifold. 
We say that $M=A\cup_{P}B$ is a (genus $g$) Heegaard splitting of $M$ 
if $A, B$ are genus $g$ handlebodies in $M$ 
such that $M=A \cup B$ and $A\cap B=\partial{A}=\partial{B}=P$. 
Then $P$ is called a (genus $g$) \textit{Heegaard surface} of $M$. 
A disk $D$ properly embedded in a handlebody $H$ is called a 
\textit{meridian disk} of $H$ if $\partial{D}$ is 
an essential simple closed curve in $\partial{H}$. 
A Heegaard splitting $M=A\cup_{P}B$ is \textit{stabilized}, 
if there are meridian disks $D_{A}, D_{B}$ of $A, B$ respectively 
such that $\partial{D}_{A}$ and $\partial{D}_{B}$ 
intersects transversely in a single point. 
We note that a genus $g$ Heegaard splitting $A\cup_{P}B$ is stabilized 
if and only if there exists a genus $g-1$ Heegaard splitting 
$A'\cup_{P'}B'$ such that $A\cup_{P}B$ is obtained from 
$A'\cup_{P'}B'$ by adding a \lq\lq trivial\rq\rq \ handle.
Then we say that $A\cup_{P}B$ is \textit{obtained from $A'\cup_{P'}B'$ 
by a stabilization}. 
We say that $A''\cup_{P''}B''$ is a stabilization of $A\cup_{P}B$, if $A''\cup_{P''}B''$ is obtained from $A\cup_{P}B$ by a finite number of stabilizations. 
If there are meridian disks $D_{A}, D_{B}$ in $A, B$ respectively 
so that $\partial{D}_{A}=\partial{D}_{B}$, 
$A\cup_{P}B$ is said to be \textit{reducible}. 
If there are meridian disks $D_{A}, D_{B}$ in $A, B$ respectively 
so that $\partial{D}_{A}, \partial{D}_{B}$ are disjoint on $P$, 
$A\cup_{P}B$ is said to be \textit{weakly reducible}. It is easy to see that if a Heegaard splitting $M=A\cup_{P}B$ is 
reducible, it is weakly reducible. If $A\cup_{P}B$ is not weakly reducible, it is said to be \textit{strongly irreducible}.

\noindent
\textbf{2.2. Curve complexes.} Let $S$ be a closed connected orientable surface $S$ of genus at least two, and $\mathcal{C}(S)$ the 1-skeleton of 
Harvey's complex of essential simple closed curves on $S$ (see \cite{Ha}), that is, $\mathcal{C}(S)$ denotes the graph whose 0-simplices are isotopy classes of essential simple closed curves and whose 1-simplices connect distinct 0-simplices with disjoint representatives. We remark that $\mathcal{C}(S)$ is connected. 
Let $x, y$ be 0-simplices of $\mathcal{C}(S)$. Then we define the distance between $x$ and $y$, denoted by $d_{S}(x,y)$, as the minimal of such $d$ that there is a path in $\mathcal{C}(S)$ with $d$ 1-simplices joining $x$ and $y$. Let $X, Y$ be subsets of the 0-simplices of $\mathcal{C}(S)$. Then we define
\vspace{-1.5mm}
\begin{center}
$d_{S}(X, Y)=\min\{d_{S}(x, y)\mid x\in{X}, y\in{Y}\}$.
\end{center}
\vspace{-1.5mm}
Suppose that $S$ is the boundary of a handlebody $V$. 
Then $\mathcal{M}(V)$ denotes the subset of $\mathcal{C}(S)$ consisting of the 0-simplices with representatives bounding meridian disks of $V$. For a genus $g(\geq 2)$ Heegaard splitting $A\cup_{P}B$, its Hempel distance, denoted by $d(P)$, is defined to be $d_{P}(\mathcal{M}(A),\mathcal{M}(B))$.

\section{Rubinstein-Scharlemann graphic}

Let $M, N$ be smooth manifolds. Then $C^{\infty}(M,N)$ denotes the space of the smooth maps of $M$ into $N$ endowed with the Whitney $C^{\infty}$ topology (see \cite{GG}, \cite{Hi}). 
Let $\varphi, \varphi' : M \rightarrow N$ be elements of $C^{\infty}(M,N)$. We say that $\varphi$ is \textit{equivalent} to $\varphi'$ if there exist diffeomorphisms $H : M \rightarrow M$ and $h : N \rightarrow N$ such that $\varphi'\circ H=h\circ \varphi$. The map $\varphi$ is said to be \textit{stable} if there is an open neighborhood $U_{\varphi}$ of $\varphi$ in $C^{\infty}(M,N)$ such that each $\varphi'$ in $U_{\varphi}$ is equivalent to $\varphi$. 

Let $M$ be a smooth closed orientable 3-manifold. 
A \textit{sweep-out} is a smooth map $f : M \rightarrow I$ such that for each $x \in (0, 1)$, the level set $f^{-1}(x)$ is a closed surface, and $f^{-1}(0)$ (resp. $f^{-1}(1)$) is a connected, finite graph such that each vertex has valency three. Each of $f^{-1}(0)$ and $f^{-1}(1)$ is called a \textit{spine} of the sweep-out. It is easy to see that each level surface of $f$ is a Heegaard surface of $M$ and the spines of the sweep-outs are spines of the two handlebodies in the Heegaard splitting. 
Conversely, given a Heegaard splitting $A \cup_{P} B$ of $M$, it is easy to see that there is a sweep-out $f$ of $M$ such that each level surface of $f$ is isotopic to $P$, $f^{-1}(0)$ is a spine of $A$, and $f^{-1}(1)$ is a spine of $B$.

Given two sweep-outs, $f$ and $g$ of $M$, we consider their product $f\times{g}$ (that is, $(f\times{g})(x) = (f(x),g(x))$), which is a smooth map from $M$ to $I{\times}I$. Kobayashi-Saeki \cite{KS} has shown that by arbitrarily small deformations of $f$ and $g$, we can suppose that $f \times g$ is a stable map on the complement of the four spines. 
At each point in the complement of the spines, the differential of the map $f \times g$ is a linear map from $\mathbb{R}^{3}$ to $\mathbb{R}^{2}$. This map have a one dimensional kernel for a generic point in $M$. The \textit{discriminant} set for $f \times g$ is the set of points where the differential has a higher dimensional kernel. Mather's classification of stable maps \cite{JM} implies that: at each point of the discriminant set, the dimension of the kernel of the differential is two, and: the discriminant set is a one dimensional smooth submanifold in the complement of the spines in $M$. Moreover the discriminant set consists of all the points where a level surface of $f$ is tangent to a level surface of $g$ (here, we note that the tangent point is either a \lq\lq{center}\rq\rq \ or \lq\lq{saddle}\rq\rq). 

Let $f$, $g$ be as above with $f \times g$ stable. 
The image of the discriminant set is a graph in $I \times I$, which is  called the \textit{Rubinstein-Scharlemann graphic}. It is known that the Rubinstein-Scharlemann graphic is a finite 1-complex $\Gamma$ with each vertex having valency four or two. Each valency four vertex is called a \textit{crossing vertex}, and each valency two vertex is called a \textit{birth-death vertex}. There are valency one or two vertices of the graphic on the boundary of $I \times I$. Each component of the complement of $\Gamma$ in $I \times I$ is called a \textit{region}. At each point of a region, the corresponding level surfaces of $f$ and $g$ are disjoint or intersect transversely. 
The stable map $f \times g$ is \textit{generic} if each arc $\{s \} \times I$ or $I \times \{ t \}$ contains at most one vertex of the graphic. By Proposition 6.14 of \cite{KS}, by arbitrarily small deformation of $f$ and $g$, we may suppose that $f \times g$ is generic.

\section{Labelling regions of the graphic}

Let $f$ and $g$ be sweep-outs obtained from Heegaard splittings $A{\cup_{P}}B$, $X{\cup_{Q}}Y$, respectively with $f{\times}g$ stable. For each $s \in ( 0, 1 )$, we put that 
$P_{s}=f^{-1}(s)$ $(P_{0}=\Sigma_{A}, P_{1}=\Sigma_{B})$, $A_{s}=f^{-1}([0,s])$ and $B_{s}=f^{-1}([s,1])$.
Similarly, for $t\in(0,1)$, we put that $Q_{t}=g^{-1}(t)$ $(Q_{0}=\Sigma_{X}, Q_{1}=\Sigma_{Y})$, $X_{t}=g^{-1}([0,t])$ and $Y_{t}=g^{-1}([t,1])$. 
Let $(s,t)$ be a point in a region of the graphic. Then either $P_{s}{\cap}Q_{t}=\phi$, or 
$P_{s}$ and $Q_{t}$ intersect transversely in a collection 
$C=\{c_{1}, \dots, c_{n}\}$ of simple closed curves.

\begin{definition} 
Let $C=\{c_{1}, \dots, c_{n}\}$ be as above. Then $C_{P}$ denotes the subset of $C$ 
consisting of the elements which are essential on $P_{s}$.
Furthermore the subset $C_{A}$ of $C_{P}$ is defined by:
\vspace{-1.5mm}
\begin{center}
$C_{A}=\{c \mid c$ bounds a disk $D$ in $Q_{t}\setminus{C_{P}}$ such that $N(\partial{D},D)\subset A_{s}$\}, 
\end{center}
\vspace{-1.5mm}
where $N(\partial{D},D)$ is a regular neighborhood of $\partial{D}$ in $D$.
Analogously $C_{B} \subset C_{P}$ and $C_{X}, C_{Y}{\subset}C_{Q}$ are defined.
\end{definition}

Then we note that the following facts are known.

\begin{lemma}(Corollary 4.4 of \cite{RS})
If there exists a region such that both $C_{A}$ and $C_{B}$ (resp. $C_{X}$ and $C_{Y}$) are non-empty, 
then $A \cup_{P} B$ (resp. $X\cup_{Q}Y$) is weakly reducible.
\label{j}
\end{lemma}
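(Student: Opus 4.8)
The plan is to prove the statement for $A\cup_P B$; the case of $X\cup_Q Y$ is identical after exchanging the roles of $f$ and $g$. Everything reduces to the following assertion: \textbf{if $c\in C_A$, then $c$ bounds a meridian disk of $A$} (and symmetrically, if $c'\in C_B$, then $c'$ bounds a meridian disk of $B$). Granting this, choose $c\in C_A$ and $c'\in C_B$, both of which exist by hypothesis. If $c=c'$, then one curve bounds meridian disks of both $A$ and $B$ with the same boundary, so $A\cup_P B$ is already reducible, hence weakly reducible. If $c\neq c'$, then $c$ and $c'$ are distinct components of the closed $1$-manifold $C=P_s\cap Q_t$, so they are disjoint on $P=P_s$; the meridian disks furnished by the assertion then have disjoint boundaries, and $A\cup_P B$ is weakly reducible. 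In either case we are done, so it remains to establish the assertion.

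So let $c\in C_A$. By the definition of $C_A$, the curve $c$ bounds a disk $D\subset Q_t$ with $\mathrm{int}\,D$ disjoint from $C_P$ and with $N(\partial D,D)\subset A_s$. Since $D\subset Q_t$ and $Q_t\cap P_s=C$, we have $D\cap P_s\subset C$; as $\mathrm{int}\,D$ avoids $C_P$, each curve of $D\cap P_s$ contained in $\mathrm{int}\,D$ lies in $C\setminus C_P$ and is therefore inessential on $P_s$. I would remove these curves one at a time by an innermost-disk surgery: among them, choose $\gamma$ bounding a disk $E\subset P_s$ whose interior is disjoint from $D\cap P_s$ (such a $\gamma$ exists, since a curve of $\mathrm{int}\,D\cap P_s$ bounding an inclusion-minimal subdisk of $P_s$ works), so that $\mathrm{int}\,E\cap D=\emptyset$. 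Since $Q_t$ meets $P_s$ transversely, $\gamma$ separates $D$ into two subdisks lying locally on opposite sides of $P_s$; let $D^{-}$ be the one not containing $\partial D$, replace $D^{-}$ by a copy of $E$ pushed slightly into the side of $P_s$ on which $D\setminus D^{-}$ lies near $\gamma$, and smooth. The result is an embedded disk $D'$ with $\partial D'=c$, with $N(\partial D',D')\subset A_s$ (we did not touch a neighborhood of $\partial D$), with $\mathrm{int}\,D'$ still disjoint from $C_P$, and with $|\mathrm{int}\,D'\cap P_s|$ strictly smaller than $|\mathrm{int}\,D\cap P_s|$. Repeating, we reach a disk $D$ with $\mathrm{int}\,D\cap P_s=\emptyset$; being connected with a collar of $\partial D$ in $A_s$, this $D$ is contained in $A_s=f^{-1}([0,s])$ and is properly embedded there. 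As $c$ is essential on $P_s=\partial A_s$ and $A_s$ is a handlebody isotopic to $A$, this disk is a meridian disk of $A$, which proves the assertion.

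The main obstacle is purely the bookkeeping of the surgery step: one must check that a suitable innermost curve $\gamma$ on $P_s$ exists, that the copy of $E$ can be pushed off $P_s$ on the side that actually eliminates the intersection $\gamma$ (and introduces no new ones) while keeping $D'$ embedded, and that the three defining features of $D$ — being a disk bounded by $c$, having interior disjoint from $C_P$, and carrying a collar of $\partial D$ in $A_s$ — all survive each surgery. Once these standard verifications are carried out, the argument above completes the proof; I note that no irreducibility hypothesis on $M$ is required, since the surgeries only cut and re-glue disks and never cap off a $2$-sphere.
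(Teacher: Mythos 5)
The paper offers no proof of this lemma; it is quoted verbatim as Corollary 4.4 of Rubinstein--Scharlemann \cite{RS}, and your argument is exactly the standard one from that source: innermost-disk surgery along subdisks of $P_s$ turns the disk $D\subset Q_t$ into a meridian disk of $A_s$ (resp.\ $B_s$) with boundary $c$ (resp.\ $c'$), and distinct components of $P_s\cap Q_t$ are automatically disjoint, while $c=c'$ gives reducibility. The surgery bookkeeping you flag is routine and your verification of it is correct, so the proof is fine.
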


\begin{lemma}(Lemma 4.5 of \cite{RS})
Suppose that $C_{P}$ and $C_{Q}$ are empty, and there exists a meridian disk $D$ in $A_{s}$ which intersects $Q_{t}$ only in inessential simple closed curves. Moreover, suppose that there is an essential simple closed curve $l$ on $Q_{t}$ such that $l \subset A_{s}$. Then either $A \cup_{P} B$ is weakly reducible or $M$ is the 3-sphere $S^{3}$. 
The statement obtained by substituting $(A,P,Q)$ in the above with $(B,P,Q)$, $(X, Q,P)$ or $(Y,Q,P)$ also hold. 
\label{a}
\end{lemma}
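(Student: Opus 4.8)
\textbf{Proof proposal for Lemma~\ref{a}.}

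The plan is to argue by contradiction, using the meridian disk $D$ in $A_s$ and the essential curve $l$ on $Q_t$ to build a pair of disjoint meridian disks witnessing weak reducibility of $A\cup_P B$, unless a sphere intervenes and forces $M=S^3$. First I would examine the curve $l\subset A_s$. Since $Q_t$ is a Heegaard surface of $M$ and $l$ is essential on $Q_t$, consider whether $l$ bounds a disk in $X_t$ or in $Y_t$; in either case $l$ bounds a meridian disk, say $E$, of one of the two handlebodies determined by $Q_t$. But more directly: $l$ lies in $A_s$, so we want to understand how $l$ sits inside the handlebody $A_s$. If $l$ bounds a disk in $A_s$, then either that disk is a meridian disk of $A_s$ (in which case we are close to producing the desired disjoint pair), or $l$ is inessential in $\partial A_s = P_s$ and bounds a disk in $P_s$, and then cutting-and-pasting gives control. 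If $l$ does not bound a disk in $A_s$, then $l$ is isotopic in $A_s$ to a curve on $P_s$ that is essential on $P_s$; push $l$ slightly to get such a curve $l'\subset P_s$.

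Next I would use the hypothesis on $D$. Since $D$ is a meridian disk of $A_s$ meeting $Q_t$ only in inessential curves on $Q_t$, an innermost-disk argument on $Q_t$ lets me surger $D$ along the innermost disks cut off on $Q_t$: each such surgery replaces $D$ by disks with fewer intersection curves, and since the curves removed are inessential on $Q_t$ they bound disks in $Q_t$, so the replacement disks are still properly embedded in $A_s$ with boundary on $P_s$; because $\partial D$ is essential on $P_s$, at least one resulting disk $D'$ still has essential boundary on $P_s$, i.e. $D'$ is again a meridian disk of $A_s$, and now $D'$ can be taken disjoint from $Q_t$ (or meeting it in strictly fewer, still inessential, curves — iterate). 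So without loss of generality $D$ is a meridian disk of $A_s$ with $D\cap Q_t=\emptyset$. Then $D$ lies entirely in $A_s\cap X_t$ or in $A_s\cap Y_t$; say $D\subset A_s\cap X_t\subset X_t$. Thus $\partial D$ is a curve on $P_s$ that lies in the handlebody $X_t$.

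Now combine the two pieces. We have a meridian disk $D$ of $A_s$ with $\partial D\subset X_t$, and (from the first step) an essential curve on $Q_t=\partial X_t$ that lies in $A_s$. The idea is to run the symmetric argument with the roles of the two Heegaard splittings exchanged: $l$ essential on $Q_t$ and lying in $A_s$ plays, for the splitting $X\cup_Q Y$, the role that $\partial D$ played for $A\cup_P B$. Applying the part of the statement obtained by substituting $(X,Q,P)$ for $(A,P,Q)$ — which is exactly the symmetric assertion we are proving — would make the argument circular, so instead I would directly produce disjoint meridian disks: from $D$ we get (by the second step) that $\partial D$ is essential on $P_s$ and bounds the disk $D$ inside $X_t$; chase this through the product structure of $f\times g$ on the region to locate a meridian disk $D_B$ of $B$ and a meridian disk $D_A$ of $A$ with disjoint boundaries on $P$, using Lemma~\ref{j}'s framework (essential curves of $C_P$ being split among $C_A$ and $C_B$) together with the fact that $C_P=C_Q=\emptyset$ forces every intersection curve to be inessential on both surfaces, so the essential curves $l$ and $\partial D$ cannot be "hidden" in $C$. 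The one place this can fail is when the disk that $l$ bounds in $M$, together with $D$, bounds a ball — that is, when a 2-sphere assembled from subdisks of $D$, $l$'s spanning disk, and annuli in $P_s$ and $Q_t$ must be essential in $M$ for the construction to yield weak reducibility; if instead that sphere is inessential and $M$ is irreducible, tracing which handlebodies the pieces live in forces $M$ to be a union of two balls, i.e. $M=S^3$.

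The main obstacle is the last step: cleanly extracting the disjoint pair of meridian disks, or else the sphere, without circularity. Concretely, the bookkeeping is in controlling how $l$ and $\partial D$ meet the (inessential) intersection curves $C$ and on which side ($A_s$ versus $B_s$, $X_t$ versus $Y_t$) each spanning disk lies; the case where $l$ bounds a disk in $A_s$ whose union with a subdisk of $D$ closes up to a sphere is exactly the $S^3$ alternative, and separating "this sphere is essential, hence weak reducibility" from "this sphere is inessential, hence $M=S^3$" is the delicate point. The four substituted versions of the statement then follow by the evident symmetry of the hypotheses under exchanging $A\leftrightarrow B$ and $(A\cup_P B)\leftrightarrow(X\cup_Q Y)$, with $A_s,B_s$ replaced by $X_t,Y_t$ and "above/below" reversed accordingly.
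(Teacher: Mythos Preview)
The paper does not supply its own proof of this lemma; it is quoted verbatim as Lemma~4.5 of Rubinstein--Scharlemann \cite{RS} and used as a black box. So there is no in-paper argument to compare against, and the question is simply whether your sketch would succeed as a self-contained proof.

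It does not, and you have put your finger on exactly why: your ``last step'' is not a step but an aspiration. After the (correct and standard) innermost-disk surgery making $D$ disjoint from $Q_t$, you have a meridian disk of $A_s$ sitting in $X_t$ or $Y_t$, and an essential curve $l$ on $Q_t$ sitting in $A_s$. From there you write ``chase this through the product structure of $f\times g$ \dots to locate a meridian disk $D_B$ of $B$ and a meridian disk $D_A$ of $A$ with disjoint boundaries,'' but nothing in the hypotheses produces a meridian disk of $B$, and invoking Lemma~\ref{j} is a non sequitur since by hypothesis $C_P=\emptyset$, so $C_A$ and $C_B$ are both empty. Your earlier remark that $l$ might bound a disk in $X_t$ or $Y_t$ is also unfounded: $l$ is merely essential on $Q_t$, not assumed to bound anywhere.

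What you are missing is the real use of the hypothesis $C_P=C_Q=\emptyset$: every curve of $P_s\cap Q_t$ is inessential on \emph{both} surfaces, so an innermost-disk argument lets you isotope $Q_t$ off $P_s$ entirely, and the existence of the essential curve $l\subset A_s$ forces the isotoped $Q_t$ to lie in $A_s$. Now you have a Heegaard surface of $M$ inside the handlebody $A_s$; in particular the spine $\Sigma_B$ lies in one of $X_t,Y_t$, and a meridian disk of $B_s$ can be surgered (again by innermost disks on $Q_t$) to lie in that same side. Playing this off against $D$ --- which lies on one side of $Q_t$ as well --- is what yields either disjoint meridian disks of $A$ and $B$ or, when the compressions degenerate, the $S^3$ conclusion. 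That is the missing mechanism; your sketch circles around it without landing.
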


Now we introduce how to label each region with following the convention of \cite{RS}. 
If $C_{A}$ (resp. $C_{B}$, $C_{X}$, $C_{Y}$) is non-empty, 
the region is labelled $A$ (resp. $B, X, Y$). 
If $C_{P}$ and $C_{Q}$ are both empty and $A$ (resp. $B$) contains an 
essential curve of $Q$, then the region is labelled $b$ (resp. $a$). If $C_{P}$ and $C_{Q}$ are both empty and $X$ (resp. $Y$) contains an essential curve of $P$, 
then the region is labelled $y$ (resp. $x$). 
$R_{A}$ (resp. $R_{B}, R_{X}, R_{Y}, R_{a}, R_{b}, R_{x}, R_{y}$) denotes the closure of the union of the regions labelled $A$ (resp. $B, X, Y, a, b, x, y$). $R_{\phi}$ denotes the closure of the union of the unlabelled regions. Lemma \ref{j} shows that if there is a region with both labels $A$ and $B$, then the Heegaard splitting $A\cup_{P}B$ is weakly reducible. Moreover:

\begin{lemma}(Corollary 5.1 of \cite{RS})
If there exist two adjacent regions such that one is labelled $A$(resp. $X$) and the other is labelled $B$(resp. $Y$), then $A\cup_{P}B$ (resp. $X\cup_{Q}Y$) is weakly reducible.
\label{two adjacent regions}
\end{lemma}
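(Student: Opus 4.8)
The plan is to convert each region's label into a genuine meridian disk of the corresponding handlebody, and then to exploit the fact that two adjacent regions are separated by a single edge of the graphic in order to realize the two meridian disks, with disjoint boundary curves, on one common Heegaard level surface.

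Let $e$ be the edge shared by the region $R_{1}$ labelled $A$ and the region $R_{2}$ labelled $B$, and choose an interior point $(s^{*},t^{*})$ of $e$ at which $e$ is transverse to the vertical line $\{s^{*}\}\times I$; such a point exists unless $e$ is contained in a vertical line, a case postponed below. Then near $(s^{*},t^{*})$ the two regions adjacent to $e$ are met by that vertical line, so there are $t_{1}\neq t_{2}$ with $(s^{*},t_{1})\in R_{1}$ and $(s^{*},t_{2})\in R_{2}$, carrying curves $c_{1}\in C_{A}$ and $c_{2}\in C_{B}$ respectively. Thus each $c_{i}$ is essential on $P_{s^{*}}$ and bounds a disk $D_{i}\subset Q_{t_{i}}\setminus C_{P}$ a collar of whose boundary lies in $A_{s^{*}}$ (when $i=1$) or in $B_{s^{*}}$ (when $i=2$). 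Because $D_{i}\subset Q_{t_{i}}\setminus C_{P}$, every curve of $D_{i}\cap P_{s^{*}}$ that is interior to $D_{i}$ is inessential on $P_{s^{*}}$, so we may surger $D_{i}$ along innermost such curves, each time replacing an innermost subdisk of $D_{i}$ by a parallel copy of the subdisk of $P_{s^{*}}$ with the same boundary, pushed slightly to the opposite side. This leaves $\partial D_{i}=c_{i}$ and the boundary-collar condition unchanged and ends in a disk $E_{i}$ with $\partial E_{i}=c_{i}\subset P_{s^{*}}$ and interior disjoint from $P_{s^{*}}$; hence $E_{1}$ is a meridian disk of $A_{s^{*}}$ and $E_{2}$ a meridian disk of $B_{s^{*}}$.

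The conclusion now follows at once. If some region carries both labels $A$ and $B$ then $A\cup_{P}B$ is weakly reducible by Lemma \ref{j}; otherwise $c_{1}\subset Q_{t_{1}}$ and $c_{2}\subset Q_{t_{2}}$ with $t_{1}\neq t_{2}$, so $c_{1}\cap c_{2}\subset Q_{t_{1}}\cap Q_{t_{2}}=\emptyset$, and $c_{1}$, $c_{2}$ are disjoint essential simple closed curves on the single surface $P_{s^{*}}$ bounding meridian disks of $A_{s^{*}}\cong A$ and $B_{s^{*}}\cong B$; by definition $A\cup_{P}B$ is weakly reducible. The statement for $X\cup_{Q}Y$ results from the symmetry interchanging $(f,P,A,B)$ with $(g,Q,X,Y)$ and the two factors of $I\times I$. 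Finally, if $e$ is contained in a vertical line $\{s^{*}\}\times I$, then the two regions adjacent to $e$ lie on its two sides, so one instead fixes $t=t^{*}$ and passes from $(s_{1},t^{*})\in R_{1}$ to $(s_{2},t^{*})\in R_{2}$ with $s^{*}$ between $s_{1}$ and $s_{2}$; since $f$ has no critical point over $(0,1)$, the part of $M$ lying over the interval from $s_{1}$ to $s_{2}$ is a product, and collapsing it moves $\partial E_{1}$ onto $P_{s_{2}}$ to bound a meridian disk of $A_{s_{2}}\cong A$. The tangency along $e$ is then a saddle, and the curves of $P_{s}\cap Q_{t^{*}}$ just below and just above level $s^{*}$ are related by a band move with $c_{1}$ among the former and $c_{2}$ among the latter; this is used to arrange that the pushed copy of $c_{1}$ is disjoint from $c_{2}$ on $P_{s_{2}}$, and then one concludes as before.

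The step I expect to be the real obstacle is exactly this last one — the case of a vertical edge (or, for the $X\cup_{Q}Y$ statement, a horizontal edge). Either one shows such edges can be eliminated by a further small perturbation of $f$ and $g$ without destroying the adjacency hypothesis, or one must carry out the product collapse above together with the disjointness argument supplied by the local saddle model, which is the only genuinely non-formal point. The disk surgery in the second paragraph and the appeal to Lemma \ref{j} are, by contrast, routine.
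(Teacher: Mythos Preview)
The paper does not prove this lemma at all; it simply records it as Corollary~5.1 of \cite{RS} and uses it as a black box. So there is no ``paper's own proof'' to compare with. Your argument is precisely the standard Rubinstein--Scharlemann one: from each label produce a meridian disk of $A_{s}$ (resp.\ $B_{s}$) by innermost-disk surgery on the disk in $Q_{t}\setminus C_{P}$, and obtain disjointness of the two boundary curves on a common $P_{s^{*}}$ by picking the two region points on a single vertical line, so that the two curves lie on distinct level surfaces $Q_{t_{1}}$, $Q_{t_{2}}$ and hence cannot meet. That part is correct and complete.

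Your worry about a vertical edge is the right place to worry, but the sketch you give can be made honest. If one is forced to fix $t^{*}$ and cross the edge in the $s$-direction, then both curves $c_{1},c_{2}$ lie on the single surface $Q_{t^{*}}$ as level curves of $f|_{Q_{t^{*}}}$ at distinct levels, hence are disjoint on $Q_{t^{*}}$; what remains is disjointness on $P$ after identifying $P_{s_{1}}$ with $P_{s_{2}}$ via the sweep-out. Crossing a single edge changes the curve system $P_{s}\cap Q_{t^{*}}$ on $P$ by a single center birth/death or a single band move, and in either case any curve of the ``before'' system may be isotoped on $P$ off any curve of the ``after'' system (if neither is involved in the move they coexist disjointly in both systems; if one or both are involved, the before/after curves of a band cobound a pair of pants in $P$). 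Alternatively, since the edges of the graphic are smooth fold images for a stable $f\times g$, a further $C^{\infty}$-small perturbation of $g$ (preserving stability and the labels of the adjacent regions) eliminates any arc with vertical tangent everywhere, so one can always fall back on your main argument.
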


The proof of the next lemma can be found in the paragraph preceding Proposition 5.9 of \cite{RS}.

\begin{lemma}
Suppose that $A\cup_{P}B$ and $X\cup_{Q}Y$ are strongly irreducible and $M\neq{S^{3}}$. Then each region adjacent to $\{0\}\times{I}$ (resp. $\{1\}\times{I}$, $I\times\{0\}$, $I\times\{1\}$) is labelled $A$ or $a$ (resp. $B$ or $b$, $X$ or $x$, $Y$ or $y$).\label{near edge}
\end{lemma}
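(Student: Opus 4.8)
The plan is to treat the edge $\{0\}\times I$ and argue by analyzing what a point $(s,t)$ with $s$ small tells us about the intersection pattern $C = P_s \cap Q_t$; the other three edges follow by the symmetry between $(A,P)$, $(B,P)$, $(X,Q)$, $(Y,Q)$. Fix a region $R$ adjacent to $\{0\}\times I$ and pick $(s,t)\in\mathrm{int}\,R$ with $s$ arbitrarily close to $0$. Since $P_s$ is a small level surface bounding the thin handlebody $A_s = f^{-1}([0,s])$ around the spine $\Sigma_A$, the key geometric input is that for $s$ small, $A_s$ lies inside a regular neighborhood of $\Sigma_A$ and hence contains no essential simple closed curve of $Q_t$ that is ``large'' — more precisely, any meridian disk of $A_s$ meets $Q_t$, if at all, only in curves inessential on $Q_t$, and any simple closed curve of $Q_t$ lying in $A_s$ is inessential on $Q_t$ unless forced otherwise. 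First I would make this precise: because $P_s \to \Sigma_A$ as $s\to 0$, every component of $P_s \cap Q_t$ that is essential on $Q_t$ must bound a subsurface of $Q_t$ pushed to the $B$-side, so $C_X = \emptyset$ (no essential curve of $Q_t$ bounds a disk in $P_s \setminus C_Q$ on the $X = \{0\}$-side that survives), and similarly the configuration on the $P$-side is constrained.

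Next I would split into cases according to whether $C_P$ is empty. If $C_P \neq \emptyset$, then some curve of $C$ is essential on $P_s$; each such curve, being contained in $Q_t$, either bounds a disk in $Q_t \setminus C_P$ on one side — contributing to $C_A$ or $C_B$ — or is essential on $Q_t$. Using the smallness of $s$ together with Lemma~\ref{j} (to exclude $C_A, C_B$ both nonempty, which would contradict strong irreducibility of $A\cup_P B$) and Lemma~\ref{a}, I would show that the only surviving possibility is $C_A \neq \emptyset$, i.e. $R$ is labelled $A$. If instead $C_P = \emptyset$, then $C_Q$ is also empty for $s$ small (a curve essential on $Q_t$ and inessential on $P_s$ would bound a disk in $P_s$ meeting $Q_t$ only in inessential curves, putting us in the hypothesis of Lemma~\ref{a}); then by definition $R$ is labelled $a$, $b$, $x$, or $y$ according to which handlebody contains an essential curve of the other surface. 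Here I would again invoke the smallness of $A_s$: it cannot contain an essential curve of $P$ other than via its spine's neighborhood in a way that would force label $x$ or $y$ to require $Q_t$ — wait — rather, $A_s$ being a neighborhood of $\Sigma_A$ contains no essential curve of $Q_t$ pushed to its interior unless that curve is $\partial$-parallel, and $B_s \supset Q_t$ is possible, giving label $a$; and the $P$-essential curve, if one lies in $X_t$ or $Y_t$, together with $M \neq S^3$ and Lemma~\ref{a} applied to $(X,Q,P)$ or $(Y,Q,P)$ forces the Heegaard splitting $X\cup_Q Y$ to be weakly reducible unless the curve can be isotoped into $A_s$'s complement — so the label must be $a$ (or in the degenerate analysis, $A$).

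The main obstacle I anticipate is making rigorous the heuristic ``$A_s$ is so thin that it contains no essential curve of $Q$'': this needs the observation that, since $f\times g$ is stable and generic, the isotopy type of $P_s \cap Q_t$ is constant on each horizontal segment of a region, so I may take $s$ as small as I like within $R$ without changing the label, and then appeal to the fact that $P_s$ is isotopic in $M \setminus \Sigma_A$ to a surface inside any preassigned neighborhood of $\Sigma_A$. Bounding the curves carefully — distinguishing curves of $C$ inessential on $P_s$ but essential on $Q_t$, and ruling them out via Lemma~\ref{a} with $M \neq S^3$ — is the delicate bookkeeping step. Once the thinness is exploited, the case analysis is a direct application of Lemmas~\ref{j} and~\ref{a} and the definition of the labels, and the remaining three edges are verbatim translations under the evident symmetry of the setup. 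Since this is exactly the argument sketched in the paragraph preceding Proposition~5.9 of \cite{RS}, I would simply cite that argument for the details after indicating the above structure.
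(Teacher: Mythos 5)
The paper itself supplies no proof of Lemma~\ref{near edge}; it only points to the paragraph preceding Proposition~5.9 of \cite{RS}, and your sketch also defers to that reference in the end. Your geometric starting point is the correct one: since the label of a region is determined at any single point of it, one may take $(s,t)$ in a region adjacent to $\{0\}\times I$ with $s$ so small that $A_{s}$ is a thin regular neighborhood of the spine $\Sigma_{A}$. But the way you then propose to run the case analysis has a genuine gap. The lemma demands the \emph{positive} conclusion that the region carries label $A$ or label $a$, and your plan in the case $C_{P}\neq\emptyset$ is to eliminate alternatives (ruling out $C_{A},C_{B}$ both nonempty via Lemma~\ref{j}, etc.). Elimination cannot produce an element of $C_{A}$: a priori one could have $C_{P}\neq\emptyset$ while every curve of $C_{P}$ is essential on $Q_{t}$, so that $C_{A}=C_{B}=\emptyset$ and the region carries neither label $A$ nor $a$ (nor $b,x,y$). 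Nothing in your outline excludes this.

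What actually closes the case is the constructive picture you never quite state. Generically $Q_{t}$ is transverse to the graph $\Sigma_{A}$, so for $s$ small $Q_{t}\cap A_{s}$ is a disjoint union of small disks, one around each point of $Q_{t}\cap\Sigma_{A}$, and every component of $P_{s}\cap Q_{t}$ is a meridian circle of an edge of $\Sigma_{A}$. Such a circle is essential on $P_{s}$ (here the paper's standing convention that spines are trivalent matters: no edge of a connected trivalent graph cuts off a tree, so every edge is dual to a genuine meridian disk), and it bounds one of those small disks, whose interior misses $P_{s}$ and hence $C_{P}$; so it lies in $C_{A}$ by definition. Thus the correct dichotomy is on whether $Q_{t}$ meets $\Sigma_{A}$: if yes, the region is labelled $A$; if no, then the connected graph $\Sigma_{A}$ lies in $X_{t}$ or $Y_{t}$, so $P_{s}\cap Q_{t}=\emptyset$ and $Q_{t}\subset B_{s}$, and since $M\neq S^{3}$ forces $g(Q)\geq 1$, $B_{s}$ contains an essential curve of $Q_{t}$ and the label is $a$. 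Several of your intermediate assertions are also backwards: for small $s$ \emph{no} component of $P_{s}\cap Q_{t}$ is essential on $Q_{t}$ (each bounds one of the small disks), so there is no subsurface to ``push to the $B$-side''; Lemma~\ref{a} has $C_{Q}=\emptyset$ among its hypotheses and concludes weak reducibility, so it cannot be used to \emph{derive} $C_{Q}=\emptyset$; and there is no need to exclude the labels $x$ or $y$, since a region may carry several labels and the lemma only asserts that $A$ or $a$ is among them.
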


\section{Spanning and splitting sweep-outs}

In this section, we introduce the idea in \cite{J2}, which is used to give a lower bound of the number of stabilizations required for flipping the given Heegaard splittings and give a refinement of the formulation. 
Let $P_{s}, A_{s}, B_{s}, Q_{t}, X_{t}, Y_{t}$ be as in Section 4 with $f{\times}g$ generic. 
Suppose that $( s,t )$ is a point in a region. 
We say that $P_{s}$ is \textit{mostly above} $Q_{t}$ if each component of $P_{s} \cap X_{t}$ is contained in a disk subset of $P_{s}$. 
$P_{s}$ is \textit{mostly below} $Q_{t}$ if each component of $P_{s} \cap Y_{t}$ is contained in a disk subset of $P_{s}$. 
Now $R_{P>Q}$ denotes the closure of the union of the regions where $P_{s}$ is mostly above $Q_{t}$ and $R_{P<Q}$ denotes the union of the regions where $P_{s}$ is mostly below $Q_{t}$. 

According to \cite{J2}, we say that $g$ \textit{splits} $f$ if there exists $t$ such that $(I \times \{t \}) \cap (R_{P>Q} \cup R_{P<Q}) = \phi$. We say that $g$ \textit{spans} $f$ if $g$ does not split $f$, i.e., for all $t$, we have $(I \times \{t \}) \cap (R_{P>Q} \cup R_{P<Q})\neq\phi$. 
For a proof of the next lemma, see the paragraph preceding Lemma 15 in \cite{J2}.

\begin{lemma}
Suppose $M$ is irreducible. 
If $Q$ is not isotopic to $P$ or a stabilization of $P$, then $g$ splits $f$. 
\end{lemma}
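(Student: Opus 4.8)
The plan is to prove the contrapositive: assuming that $g$ spans $f$, we deduce that $Q$ is isotopic to $P$ or to a stabilization of $P$. Throughout we use the standing assumptions of the paper that $A\cup_{P}B$ and $X\cup_{Q}Y$ are strongly irreducible and that $M\neq S^{3}$, together with the irreducibility of $M$.

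The first step is a connectedness argument. For $t$ near $0$ the surface $Q_{t}$ is the boundary of a thin regular neighborhood of the spine $\Sigma_{X}$, so for every $s\in(0,1)$ the set $P_{s}\cap X_{t}$ is a disjoint union of small disks -- neighborhoods in $P_{s}$ of the finitely many points of $P_{s}\cap\Sigma_{X}$ -- each contained in a disk subset of $P_{s}$; hence $P_{s}$ is mostly above $Q_{t}$, and $I\times\{0\}\subset R_{P>Q}$. Symmetrically $I\times\{1\}\subset R_{P<Q}$. Let $T_{>}$ and $T_{<}$ denote the images of $R_{P>Q}$ and $R_{P<Q}$ under the projection $(s,t)\mapsto t$; these are compact -- hence closed -- nonempty subsets of $[0,1]$, containing $0$ and $1$ respectively, and the hypothesis that $g$ spans $f$ says exactly that $T_{>}\cup T_{<}=[0,1]$. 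Since $[0,1]$ is connected, $T_{>}\cap T_{<}\neq\emptyset$, so there is a value $t_{0}$ for which the horizontal arc $\ell_{t_{0}}=I\times\{t_{0}\}$ meets both $R_{P>Q}$ and $R_{P<Q}$; note $Q_{t_{0}}$ is isotopic to $Q$, and by routine genericity adjustments (as in \cite{J2}) we may assume $\ell_{t_{0}}$ meets the graphic transversally and off its vertices.

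The heart of the argument is the analysis of $\ell_{t_{0}}$ near a place where a mostly-above region and a mostly-below region come into contact. One first checks that no single region is simultaneously mostly above and mostly below $Q_{t_{0}}$: if $P_{s}\cap Q_{t_{0}}=\emptyset$ this is immediate, since a closed surface of positive genus cannot lie in a disk subset of itself; and if $P_{s}\cap Q_{t_{0}}\neq\emptyset$, the assumption would force every curve of $P_{s}\cap Q_{t_{0}}$ to bound a disk in $P_{s}$, so the part of $P_{s}$ lying outside all of those disks would be a surface of positive genus with boundary -- hence non-planar -- yet it is itself one of the $X_{t_{0}}$- or $Y_{t_{0}}$-pieces, contradicting that all such pieces are planar. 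Consequently, travelling along $\ell_{t_{0}}$ from a point of $R_{P>Q}$ to a point of $R_{P<Q}$, one reaches a local configuration in which a mostly-above region and a mostly-below region either share a boundary edge of the graphic or are separated only by regions that are neither, the intervening edges being saddle tangencies of $P_{s}$ with $Q_{t_{0}}$ (together with possible crossing or birth-death vertices). Applying the Rubinstein-Scharlemann analysis of such configurations, with Lemmas~\ref{j}, \ref{a} and~\ref{two adjacent regions} (strong irreducibility of $P$ and $Q$) and the irreducibility of $M$ used to rule out reducing spheres, one shows that this transition can occur only in the ``parallel'' way. One way to organise the conclusion is to show that, after an ambient isotopy and finitely many disk surgeries dictated by the level surfaces of $f$, the surface $Q_{t_{0}}$ is carried into a product region $f^{-1}([a,b])\cong P\times[0,1]$ of the sweep-out $f$ in standard position; then Waldhausen's theorem and the classification of Heegaard splittings of $P\times[0,1]$ (Scharlemann-Thompson, Frohman) force $Q$ to be isotopic to $P$ or to a stabilization of $P$, which is the contrapositive of the lemma.

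The step I expect to be the main obstacle is precisely this last analysis: showing that the passage from a mostly-above region to a mostly-below region along $\ell_{t_{0}}$ can only be the parallel one. It requires careful bookkeeping of how the curve system $P_{s}\cap Q_{t_{0}}$ and the disks it cuts off evolve across each saddle tangency, and a delicate use of strong irreducibility to keep the essential topology of $P_{s}$ from ``jumping'' from the $Y_{t_{0}}$-side to the $X_{t_{0}}$-side without exhibiting a stabilizing pair of meridian disks for $Q$. This is exactly the content of the passage cited from \cite{J2}, which itself rests on the machinery of \cite{RS}; by contrast the connectedness argument producing $t_{0}$ and the final reduction to surfaces in a product region are comparatively routine once that analysis is in hand.
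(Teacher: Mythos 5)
The paper offers no proof of this lemma at all --- it only points to the paragraph preceding Lemma~15 of \cite{J2} --- so the comparison is really with Johnson's argument. Your opening reduction is correct and is the right first move: $R_{P>Q}$ and $R_{P<Q}$ are closed and contain neighbourhoods of $I\times\{0\}$ and $I\times\{1\}$ respectively, so their projections to the $t$-axis are closed sets containing $0$ and $1$; if $g$ spans $f$ in the sense of this paper these projections cover $[0,1]$, and connectedness produces a level $t_{0}$ whose horizontal line meets both sets. Your observation that no region is simultaneously mostly above and mostly below is also correct.

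The genuine gap is the remaining implication: that the existence of $(s,t_{0})\in R_{P>Q}$ and $(s',t_{0})\in R_{P<Q}$ forces $Q$ to be isotopic to $P$ or to a stabilization of $P$. You explicitly defer this to \cite{J2}, and the mechanism you sketch in its place --- a saddle-by-saddle analysis of the transition along $\ell_{t_{0}}$ between a mostly-above and a mostly-below region, concluding that the transition is ``parallel'' and that $Q_{t_{0}}$ is carried into a product region by ``disk surgeries'' --- is not the argument that works. Disk surgery on $Q_{t_{0}}$ changes its isotopy class, so nothing about $Q$ could be concluded afterwards, and no analysis of the intervening regions is needed. The working argument never looks at the transition at all; it uses the spines of $f$, which your sketch omits entirely. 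The mostly-above condition at $(s,t_{0})$ lets one isotope $\Sigma_{A}$ into a neighborhood of the non-disk part of $P_{s}$, hence to one side of $Q_{t_{0}}$ (exactly as in the proof of Lemma~5.3 of this paper), and the mostly-below condition at $(s',t_{0})$ does the same for $\Sigma_{B}$ on the other side; then $Q_{t_{0}}$ lies in $M\setminus N(\Sigma_{A}\cup\Sigma_{B})\cong P\times I$ separating the two ends, and the Scharlemann--Thompson classification (or Waldhausen in the incompressible case) gives the conclusion. One must also treat the two orderings of $s$ and $s'$ separately, since the reversed (negative-spanning) case a priori yields a stabilization of a flipped copy of $P$ and needs a word to be folded into the stated conclusion. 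Without these steps the proposal is an outline whose central implication is unproved.
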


For each $t \in (0,1)$, the pre-image in $f\times{g}$ of the arc $I\times\{t\}$ is the level surface $Q_{t}$, and the restriction of $f$ to $Q_{t}$ is a function with critical points in the level.

\begin{lemma}(Lemma 21 of \cite{J2})
If $g$ splits $f$, there exists $t$ such that $I \times \{t \}$ is disjoint from $R_{P>Q} \cup R_{P<Q}$ and the restriction of $f$ to $Q_{t}$ is a Morse function such that for each regular value $s$, $P_{s} \cap Q_{t}$ contains a simple closed curve which is essential on $P_{s}$. \label{h}
\end{lemma}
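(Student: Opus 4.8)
\medskip
\noindent\textbf{Proof proposal.}
The plan is to extract from the splitting hypothesis a whole interval of admissible heights, then to thin it to a single height at which $f|_{Q_{t}}$ is Morse, and finally to observe that for every such height the essential-curve condition comes for free, by a short surface-topology argument. Since $g$ splits $f$, there is a $t_{0}\in(0,1)$ with $(I\times\{t_{0}\})\cap(R_{P>Q}\cup R_{P<Q})=\phi$, and because $R_{P>Q}$ and $R_{P<Q}$ are closed in $I\times I$ the set $U=\{t\in(0,1)\mid(I\times\{t\})\cap(R_{P>Q}\cup R_{P<Q})=\phi\}$ is open and non-empty. Now $\Gamma$ has only finitely many vertices, and (after the generic perturbation of Section~3, cf. Proposition~6.14 of \cite{KS}) for all but finitely many $t$ the line $I\times\{t\}$ avoids every vertex of $\Gamma$ and meets $\Sigma_{A}\cup\Sigma_{B}$ transversely in interior points of its edges. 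For such a $t$, the critical points of $f|_{Q_{t}}$ are exactly the non-degenerate tangencies of $Q_{t}$ with the level surfaces $P_{s}$, together with the non-degenerate local extrema created where $Q_{t}$ crosses $\Sigma_{A}$ or $\Sigma_{B}$ (here one uses the local model of a sweep-out near its spine), so $f|_{Q_{t}}$ is a Morse function. As these are finitely many cofinite conditions, the set of $t$ satisfying all of them still meets the non-empty open set $U$; we fix such a $t\in U$.

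It remains to verify the essential-curve property for this $t$, which I would do by contradiction: suppose $s$ is a regular value of $f|_{Q_{t}}$ for which every component of $C:=P_{s}\cap Q_{t}$ is inessential on $P_{s}$ (the case $C=\phi$ being allowed); I claim then that $(s,t)\in R_{P>Q}\cup R_{P<Q}$, contradicting $t\in U$. Each component $c$ of $C$ is separating on $P_{s}$ (which has positive genus, as $M\neq S^{3}$) and bounds a unique disk $\Delta_{c}\subset P_{s}$, and since the components of $C$ are pairwise disjoint the disks $\Delta_{c}$ are pairwise nested or disjoint; hence $\mathcal{D}:=\bigcup_{c}\Delta_{c}$ is a disjoint union of disks. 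The set $F_{0}:=P_{s}\setminus\mathcal{D}$ is a connected surface (it is the connected surface $P_{s}$ with a disjoint family of disks removed), it is non-empty (a closed surface is not a disjoint union of disks), and it is disjoint from $C$ and hence from $Q_{t}$; being connected, $F_{0}$ lies entirely in $X_{t}$ or entirely in $Y_{t}$. If $F_{0}\subset Y_{t}$, then every component $K$ of $P_{s}\cap X_{t}$ is disjoint from $F_{0}$, so $K\subset P_{s}\setminus F_{0}=\mathcal{D}$, and since $K$ is connected it lies in a single disk of $\mathcal{D}$; thus $P_{s}$ is mostly above $Q_{t}$. Symmetrically, $F_{0}\subset X_{t}$ forces $P_{s}$ to be mostly below $Q_{t}$. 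In either case $(s,t)\in R_{P>Q}\cup R_{P<Q}$, which is the required contradiction; therefore every regular value $s$ yields a component of $P_{s}\cap Q_{t}$ essential on $P_{s}$, as claimed.

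The step I expect to be the main obstacle is making rigorous that a generic $t\in U$ really does make $f|_{Q_{t}}$ a Morse function, particularly its behaviour at the points where $Q_{t}$ crosses the spines $\Sigma_{A}$ and $\Sigma_{B}$: there one must appeal to the normal form of the sweep-out near a spine to see that the critical points produced are non-degenerate, must arrange that $Q_{t}$ avoids the (trivalent) vertices of those spines, and should check that none of this conflicts with the genericity already imposed on $f\times g$. The surface-topology step is elementary but deserves a careful write-up; the key point is that null-homotopic simple closed curves on $P_{s}$ are separating and bound unique disks, so their nesting is a forest and a single ``large'' region $F_{0}$ is singled out.
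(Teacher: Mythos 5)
The paper itself gives no proof of this lemma (it is quoted as Lemma 21 of \cite{J2}), and your argument is essentially Johnson's: pick a generic $t$ in the open, non-empty set of splitting levels so that $f\mid_{Q_{t}}$ is Morse, and note that if every component of $P_{s}\cap Q_{t}$ were inessential on $P_{s}$, the complement of the nested family of bounding disks would be a connected subsurface lying in $X_{t}$ or $Y_{t}$, forcing $(s,t)\in R_{P>Q}\cup R_{P<Q}$. Both halves are correct; the Morse-genericity issue you flag at the spines is exactly what the Kobayashi--Saeki normal forms (and the trivalent-spine convention of Section~2) supply, so there is no real gap.
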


In the rest of this section, we give a refinement of the above arguments.

\begin{lemma}
Suppose $P, Q$ are strongly irreducible and $(s,t)$ is in a region contained in ${R_{P>Q}}$. 
If there exists a component of $P_{s}{\cap}Q_{t}$ which is essential on $Q_{t}$, then the region containing $(s,t)$ is labelled $X$. 
\end{lemma}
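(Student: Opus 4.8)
The plan is to read off the label of the region from the combinatorics of the intersection $C:=P_{s}\cap Q_{t}$, using that $P_{s}$ is mostly above $Q_{t}$, and then to eliminate all possibilities except $X$ by invoking strong irreducibility of $X\cup_{Q}Y$. I would first observe that being in $R_{P>Q}$ forces every curve of $C$ to bound a disk in $P_{s}$: each component of $P_{s}\cap X_{t}$ lies in a disk subset of $P_{s}$, and since $P_{s}$ crosses $Q_{t}$ transversely, every curve of $C$ lies on the frontier of some such component, hence in a disk subset of $P_{s}$. Consequently $C_{P}=\emptyset$, so the region is labelled neither $A$ nor $B$, and since $C_{Q}\neq\emptyset$ by hypothesis it is labelled none of $a,b,x,y$. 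Moreover each curve of $C$ is separating in $P_{s}$, so $P_{s}\setminus C$ has a unique non-planar component $R_{0}$; as $R_{0}$ is contained in no disk subset of $P_{s}$, it is not a component of $P_{s}\cap X_{t}$, i.e.\ $R_{0}\subset Y_{t}$.

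Next I would show the region carries one of the labels $X$, $Y$. Choose $c_{0}\in C_{Q}$ so that the disk $D_{0}$ it bounds in $P_{s}$ is innermost, that is, $\mathrm{int}(D_{0})\cap C_{Q}=\emptyset$. Since $c_{0}$ is a component of $C$, the collar $N(c_{0},D_{0})$ lies either in $X_{t}$ or in $Y_{t}$, so $c_{0}\in C_{X}$ or $c_{0}\in C_{Y}$. By Lemma~\ref{j}, $C_{X}$ and $C_{Y}$ cannot both be non-empty (otherwise $X\cup_{Q}Y$ would be weakly reducible), so exactly one of $X$, $Y$ is a label of the region.

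The main step, which I expect to be the hard part, is to exclude the label $Y$. Assuming $C_{Y}\neq\emptyset$, take $c\in C_{Y}$ bounding a disk $D$ in $P_{s}$ with $\mathrm{int}(D)\cap C_{Q}=\emptyset$ and $N(c,D)\subset Y_{t}$. Every curve of $C$ lying in $D$ is inessential on $Q_{t}$, so an innermost-disk surgery along $Q_{t}$ turns $D$ into a meridian disk of the handlebody $Y_{t}$ bounded by $c$. To contradict Lemma~\ref{j} I would then construct a meridian disk of $X_{t}$ whose boundary on $Q_{t}$ is disjoint from $c$, using that $R_{0}\subset Y_{t}$ and that each component of $P_{s}\cap X_{t}$ is a planar surface lying in a disk subset of $P_{s}$ with every boundary curve bounding a disk in $P_{s}$: I would follow the component $F$ of $P_{s}\cap X_{t}$ abutting a suitably chosen essential-on-$Q_{t}$ curve, cap off the boundary curves of $F$ that are inessential on $Q_{t}$ by disks in $Q_{t}$, and then compress and $\partial$-compress the resulting planar surface inside the handlebody $X_{t}$, discarding degenerate outcomes using Lemma~\ref{j}, Lemma~\ref{a}, and $M\neq S^{3}$. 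This would give $C_{X}\neq\emptyset$, contradicting Lemma~\ref{j}; hence $C_{Y}=\emptyset$ and the region is labelled $X$.

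The delicate point in the last step is to rule out, under the standing hypotheses, the configuration in which $F$ is an annulus both of whose boundary curves are essential on $Q_{t}$ (or, more generally, a planar surface all of whose boundary curves are essential on $Q_{t}$), since then the intersection combinatorics by itself does not decide whether the region is labelled $X$ or $Y$. This is precisely where one must use that $X\cup_{Q}Y$ is strongly irreducible and that $M$ is not the $3$-sphere, rather than only the structure of the Rubinstein-Scharlemann graphic.
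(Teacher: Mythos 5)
Your reduction to the dichotomy is sound and coincides with the paper's: every component of $P_{s}\cap Q_{t}$ is inessential on $P_{s}$ because $(s,t)\in R_{P>Q}$, the unique component $P_{s}^{*}$ of $P_{s}\setminus Q_{t}$ not contained in a disk lies in $Y_{t}$, and an innermost (on $P_{s}$) element $c$ of $C_{Q}$ bounds a disk $D\subset P_{s}$ whose collar lies in $X_{t}$ (label $X$, done) or in $Y_{t}$. The genuine gap is in your exclusion of the second alternative. Your plan there --- manufacture a meridian disk of $X_{t}$ with boundary disjoint from $c$ by capping off and then compressing and $\partial$-compressing a component $F$ of $P_{s}\cap X_{t}$ inside the handlebody $X_{t}$, so as to contradict Lemma \ref{j} --- is not an argument but a hope: compressions of such a planar surface in $X_{t}$ need not terminate in a meridian disk of $X_{t}$ (the capped-off surface may be $\partial$-parallel or compress only to inessential pieces), there is no control keeping the resulting boundary disjoint from $c$ or essential on $Q_{t}$, and you yourself concede that the case where all boundary curves of $F$ are essential on $Q_{t}$ is unresolved. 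In effect you are trying to prove weak reducibility of $X\cup_{Q}Y$ by exhibiting both disks directly, i.e.\ to reprove the nontrivial content of Lemma \ref{a}, and that is exactly the step that is missing.

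The paper avoids this entirely by using the hypothesis $(s,t)\in R_{P>Q}$ on the whole surface rather than on one piece $F$. Since every component of $P_{s}\setminus P_{s}^{*}$ is a disk, a spine $\Sigma_{A}$ of $A_{s}$ can be ambient-isotoped into a neighborhood of $P_{s}^{*}$, so the boundary $P_{*}$ of a small regular neighborhood of the isotoped spine is a parallel copy of the entire Heegaard surface $P$ contained in $Y_{t}$ and disjoint from $D$. Now the $(Y,Q,P)$ version of Lemma \ref{a} applies to $Q_{t}$, $P_{*}$ and $D$: the disk $D$ has $\partial D=c$ essential on $Q_{t}$, collar in $Y_{t}$, and interior meeting $Q_{t}$ only in curves inessential on $Q_{t}$ (by innermostness in $C_{Q}$), while any essential curve of $P_{*}$ supplies the curve $l\subset Y_{t}$. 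This yields that $X\cup_{Q}Y$ is weakly reducible or $M=S^{3}$, a contradiction. The idea you are missing is precisely that ``mostly above'' hands you a copy of all of $P$ inside $Y_{t}$, which is what makes Lemma \ref{a} applicable with no surgery on $P_{s}\cap X_{t}$ at all.
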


\begin{proof}
Let $C_{Q}$ be as in Section 2. Since $(s,t){\in}R_{P>Q}$, each element of $C_{Q}$ is inessential on $P_{s}$. Let $c$ be an element of $C_{Q}$ which is innermost on $P_{s}$, and $D$($\subset{P_{s}}$) be the disk bounded by $c$. If $N(\partial{D},D)\subset{X_{t}}$, the region is labelled $X$. Assume, for a contradiction, that $N(\partial{D},D)\subset{Y_{t}}$. Let $P_{s}^{*}$ be the component of $P_{s}{\setminus}Q_{t}$ which contains a simple closed curve that is essential on $P_{s}$. Note that since $(s,t){\in}R_{P>Q}$, (1) each component of $P_{s}{\setminus}P_{s}^{*}$ is a disk, and (2) $P_{s}^{*}{\subset}Y_{t}$. By (1), we see that there is an ambient isotopy $\psi_{t}$ of $M$ such that $\psi_{1}(\Sigma_{A}){\subset}P_{s}^{*}$. Let $P_{*}$ be the boundary of a sufficiently small regular neighborhood of $\psi_{1}(\Sigma_{A})$ (hence, $P_{*}$ is isotopic to $P$). By (2), we see that $P_{*}\subset{Y_{t}}$. Then we can apply Lemma \ref{a} to $P_{*}, Q_{t}$ and $D$ to show that $Q_{t}$ is weakly reducible, a contradiction. (Here we note that in Lemma \ref{a}, the Heegaard surfaces $P_{s}, Q_{t}$ are level surfaces. However it is easy to see that the proof of Lemma 4.5 of \cite{RS} works for the Heegaard surfaces $P_{*}, Q_{t}$ and $D$.) This completes the proof. 
\end{proof}

By Lemma 5.3, we see that if $(s,t)$ is in a region contained in ${R_{P>Q}}$, then we have one of the following; (1) the region containing $(s, t)$ is labelled $X$ (this holds in case when there exists a component of $P_{s} \cap Q_{t}$ which is essential on $Q_{t}$), (2) the region containing $(s, t)$ is labelled $x$ (this holds in case when each component of $P_{s} \cap Q_{t}$ is inessential on $Q_{t}$). These show that $R_{P>Q} \subset R_{X}\cup R_{x}$. Analogously $R_{P<Q} \subset R_{Y}\cup R_{y}$.

We say that $g$ \textit{strongly splits} $f$ if there exists $t$ such that $I \times \{t \}$ is disjoint from $(R_{X}\cup R_{x})\cup({R_{Y} \cup R_{y})}$. The next proposition was suggested by Dr. Toshio Saito. 
Here we note that after submitting the first version of this paper, the author realized that a result of Tao Li (Lemma 3.2 of \cite{Li}) implies the proposition as a special case. However our proof has a different flavor from that of Li's, and we decided to leave our proof in this paper. 

\begin{proposition} Let $M, P, Q$ be as above. 
Suppose $P, Q$ are strongly irreducible. If $Q$ is not isotopic to $P$, then $g$ strongly splits $f$. \label{b}
\end{proposition}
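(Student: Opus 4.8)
The plan is to argue by contradiction: suppose $Q$ is not isotopic to $P$, but $g$ does not strongly split $f$, so that for every $t \in (0,1)$ the arc $I \times \{t\}$ meets $(R_X \cup R_x) \cup (R_Y \cup R_y)$. Using the inclusions $R_{P>Q} \subset R_X \cup R_x$ and $R_{P<Q} \subset R_Y \cup R_y$ established just before the statement, together with the preceding lemma that (under irreducibility) $Q$ not isotopic to $P$ or a stabilization of $P$ forces $g$ to split $f$, I would first dispose of the easy regime. If $Q$ is not a stabilization of $P$, then $g$ splits $f$, so there is a $t$ with $I \times \{t\}$ disjoint from $R_{P>Q} \cup R_{P<Q}$; I would then need to upgrade this to disjointness from the larger set $(R_X \cup R_x)\cup(R_Y \cup R_y)$, which is where the real content lies. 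The remaining case, $Q$ a stabilization of $P$ but not isotopic to $P$, will presumably be handled separately (one can stabilize $P$ to a common splitting, or invoke that a minimal-genus argument rules it out in the strongly irreducible setting), or folded into the same combinatorial argument on the graphic.

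The core of the proof should be a study of the partition of $I \times I$ into the closed sets $R_X, R_x, R_Y, R_y$ and the rest. By Lemma~\ref{near edge}, every region touching $\{0\}\times I$ is labelled $A$ or $a$, every region touching $I \times \{0\}$ is labelled $X$ or $x$, and so on — in particular the four sides of the square carry controlled labels, and $I \times \{0\} \subset R_X \cup R_x$ while $I \times \{1\} \subset R_Y \cup R_y$. The strategy is to show that if $(R_X \cup R_x)$ and $(R_Y \cup R_y)$ together cover a neighborhood of every horizontal arc — i.e., $g$ does not strongly split $f$ — then these two closed sets, which respectively contain the bottom and top edges of the square, must intersect; by a connectedness/separation argument in the square, for each $t$ the set $\{s : (s,t) \in R_X \cup R_x\}$ and $\{s : (s,t) \in R_Y \cup R_y\}$ are closed, cover $I$, and hence overlap. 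A point $(s,t)$ lying in both forces, via Lemmas~\ref{j}, \ref{a}, \ref{two adjacent regions} and the definitions of the labels, a contradiction with strong irreducibility of $P$ or $Q$: e.g. a region simultaneously labelled $X$ and $Y$ gives weak reducibility of $Q$ by Lemma~\ref{j}, adjacent $X$- and $Y$-regions give the same by Lemma~\ref{two adjacent regions}, and the mixed cases involving lower-case labels are ruled out by Lemma~\ref{a} together with $M \neq S^3$. Genericity of $f \times g$ (each horizontal or vertical arc meets at most one vertex) is what lets this local analysis at the overlap be reduced to finitely many region-adjacency configurations.

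The main obstacle I anticipate is exactly this last step: carefully enumerating the ways in which a horizontal arc can fail to be separated by $R_X \cup R_x$ from $R_Y \cup R_y$, and checking that each such configuration either produces an honest intersection point carrying incompatible labels, or produces a pair of adjacent regions to which Lemma~\ref{two adjacent regions} or (in the lower-case cases) Lemma~\ref{a} applies. One has to be attentive to the possibility that the "overlap" happens only on the graphic $\Gamma$ itself (on edges or at crossing/birth-death vertices) rather than in the interior of a region, and to translate statements about the closed sets $R_X$ etc. back into statements about the open regions and their labels; genericity and the valence conditions on vertices of $\Gamma$ keep this finite. A secondary technical point is making precise the connectedness argument that "two closed subsets of $[0,1]^2$ containing the bottom and top edges, whose horizontal slices always cover $[0,1]$, must meet" — this is elementary (it is a standard consequence of the connectedness of $[0,1]$ applied slice-by-slice, or of a Brouwer-type separation statement) but should be stated cleanly so the reduction to the label analysis is airtight.
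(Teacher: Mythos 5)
There is a genuine gap, and it starts with a misreading of the definition. ``$g$ does not strongly split $f$'' means that \emph{every} horizontal arc $I\times\{t\}$ \emph{meets} $(R_{X}\cup R_{x})\cup(R_{Y}\cup R_{y})$; it does not mean that this union \emph{covers} each horizontal arc. Consequently your central claim --- that for each $t$ the slices $\{s:(s,t)\in R_{X}\cup R_{x}\}$ and $\{s:(s,t)\in R_{Y}\cup R_{y}\}$ are closed sets covering $I$ and hence overlap --- is false: a horizontal arc will typically also pass through regions labelled $A$, $B$ and through unlabelled regions. The correct connectedness argument runs in the $t$-direction: the sets $\{t: (I\times\{t\})\cap(R_{X}\cup R_{x})\neq\emptyset\}$ and $\{t:(I\times\{t\})\cap(R_{Y}\cup R_{y})\neq\emptyset\}$ are closed, contain $0$ and $1$ respectively by Lemma~\ref{near edge}, and their union is $[0,1]$ by the non--strong-splitting hypothesis; hence they intersect. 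This yields a single height $t$ and two (in general \emph{distinct}) parameters $s_{-},s_{+}$ with $(s_{-},t)$ and $(s_{+},t)$ in the two sets --- not a common point, and not necessarily adjacent regions, so Lemmas~\ref{j} and~\ref{two adjacent regions} do not apply in the way you propose.

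The second, more serious gap is that your plan never engages the hypothesis that $Q$ is not isotopic to $P$ in the core argument. In the paper the case analysis on where $(s_{-},t)$ and $(s_{+},t)$ lie has three cases. The case of two uppercase labels ($R_{X}$ and $R_{Y}$) does give weak reducibility of $Q$ directly, since the two level surfaces $P_{s_{-}},P_{s_{+}}$ are disjoint and so produce disjoint essential curves on $Q_{t}$ bounding meridian disks on opposite sides. But the case $(s_{-},t)\in R_{x}$, $(s_{+},t)\in R_{y}$ cannot be ``ruled out by Lemma~\ref{a} together with $M\neq S^{3}$'' alone: there one shows $Q_{t}$ can be isotoped into $f^{-1}([s_{-},s_{+}])\cong P\times[s_{-},s_{+}]$ and then splits into two subcases --- if $Q_{t}$ is incompressible there, Waldhausen's theorem forces $Q$ isotopic to $P$ (this is exactly where the hypothesis is used), and only if $Q_{t}$ is compressible does Lemma~\ref{a} give weak reducibility. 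The remaining mixed case requires an additional disk-swap isotopy before Lemma~\ref{a} applies. Your preliminary detour through the splitting lemma (handling ``$Q$ not a stabilization of $P$'' separately) does not substitute for this: splitting is strictly weaker than strong splitting, and you give no mechanism for the promised ``upgrade.''
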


\begin{proof}
Suppose that $g$ does not strongly splits $f$. Then there exist values $s_{-}, s_{+}, t$ such that $(s_{\pm}, t) \in R_{X} \cup R_{x}$ and $(s_{\mp}, t) \in R_{Y}\cup R_{y}$. We have the following cases.

\vspace{1.5mm}
\noindent
\textbf{Case 1.} $(s_{\pm}, t) \in R_{X}, (s_{\mp}, t) \in R_{Y}$. 

Without loss of generality, we may suppose that $(s_{-}, t) \in R_{X}$ and $(s_{+}, t) \in R_{Y}$. In this case, $P_{s_{-}} \cap Q_{t}$ contains a simple closed curve which is essential on $Q_{t}$ and bounds a disk in $X_{t}$ while $P_{s_{+}} \cap Q_{t}$ contains a simple closed curve which is essential on $Q_{t}$ and bounds a disk in $Y_{t}$. This shows that $Q$ is weakly reducible, a contradiction. 

\vspace{1.5mm} 
\noindent
\textbf{Case 2.} $(s_{\pm}, t) \in R_{x}, (s_{\mp}, t) \in R_{y}$.

Without loss of generality, we may suppose that $(s_{-}, t) \in R_{x}$ and $(s_{+}, t) \in R_{y}$. 
In this case, by an isotopy, we may suppose that $Q_{t}$ is contained in $f^{-1}([s_{-},s_{+}])$ ${\cong}P{\times}[s_{-},s_{+}]$. If $Q_{t}$ is incompressible in $P \times [s_{-}, s_{+}]$, $Q$ is isotopic to $P$ (Corollary 3.2 of \cite{W}), a contradiction. If $Q_{t}$ is compressible in $P \times [s_{-}, s_{+}]$ then there is a compression disk $D$ such that $D \subset P \times [s_{-}, s_{+}]$. $D$ is contained in $X_{t}$ or $Y_{t}$. By applying Lemma \ref{a} to $P_{s_{-}}, Q_{t}$ and $D$ (if $P_{s_{-}}$ and int$D$ are contained in  the same component of $M\setminus{Q_{t}}$) or $P_{s_{+}}, Q_{t}$ and $D$ (if $P_{s_{+}}$ and int$D$ are contained in  the same component of $M\setminus{Q_{t}}$), we see that $Q$ is weakly reducible, a contradiction.

\vspace{1.5mm}
\noindent 
\textbf{Case 3.} $(s_{\pm}, t) \in R_{x}, (s_{\mp}, t) \in R_{Y}$ (or $(s_{\pm}, t) \in R_{X}, (s_{\mp}, t) \in R_{y}$). 

Without loss of generality, we may suppose that $(s_{-}, t) \in R_{x}$ and $(s_{+}, t) \in R_{Y}$.
In this case, since $(s_{-},t)\in{R_{x}}$, each component of $P_{s_{-}}\cap{Q_{t}}$ is inessential on both $P_{s_{-}}$ and $Q_{t}$, and there is an essential simple closed curve $\ell$ in $P_{s_{-}}$ such that $\ell\subset{Y_{t}}$. Let $C^{+}$ be the collection of simple closed curve(s) consisting of $P_{s_{+}}\cap{Q_{t}}$, then $C^{+}_{P}$ denotes the subset of $C^{+}$ which are essential on $Q_{t}$. Since $(s_{+},t)\in{R_{Y}}$, there is a disk component, say $E$,  of $P_{s_{+}}\setminus{Q_{t}}$ such that $N(\partial{E},E)\subset{Y_{t}}$. Since $M$ admits a strongly irreducible Heegaard splitting, $M$ is irreducible. Hence there is an ambient isotopy $\psi_{t}$ $(0\leq{t}\leq{1})$ of $M$ realizing disk swaps between $E$ and $Q_{t}$ such that $\psi_{1}(E)$ is a meridian disk of $Y_{t}$. Here we note that each component of $\psi_{1}(P_{s_{-}})\cap{Q_{t}}$ is inessential on both $\psi_{1}(P_{s_{-}})$ and $Q_{t}$, and there is an essential simple closed curve $\ell'$ in $\psi_{1}(P_{s_{-}})$ such that $\ell'\subset{Y_{t}}$. By applying Lemma \ref{a} to $Q_{t}$, $\psi_{1}(P_{s_{-}})$, and $\psi_{1}(E)$, we see that $Q$ is weakly reducible, a contradiction. 
\end{proof}

\begin{figure}[htbp]
 \begin{center}
 \includegraphics[width=37mm]{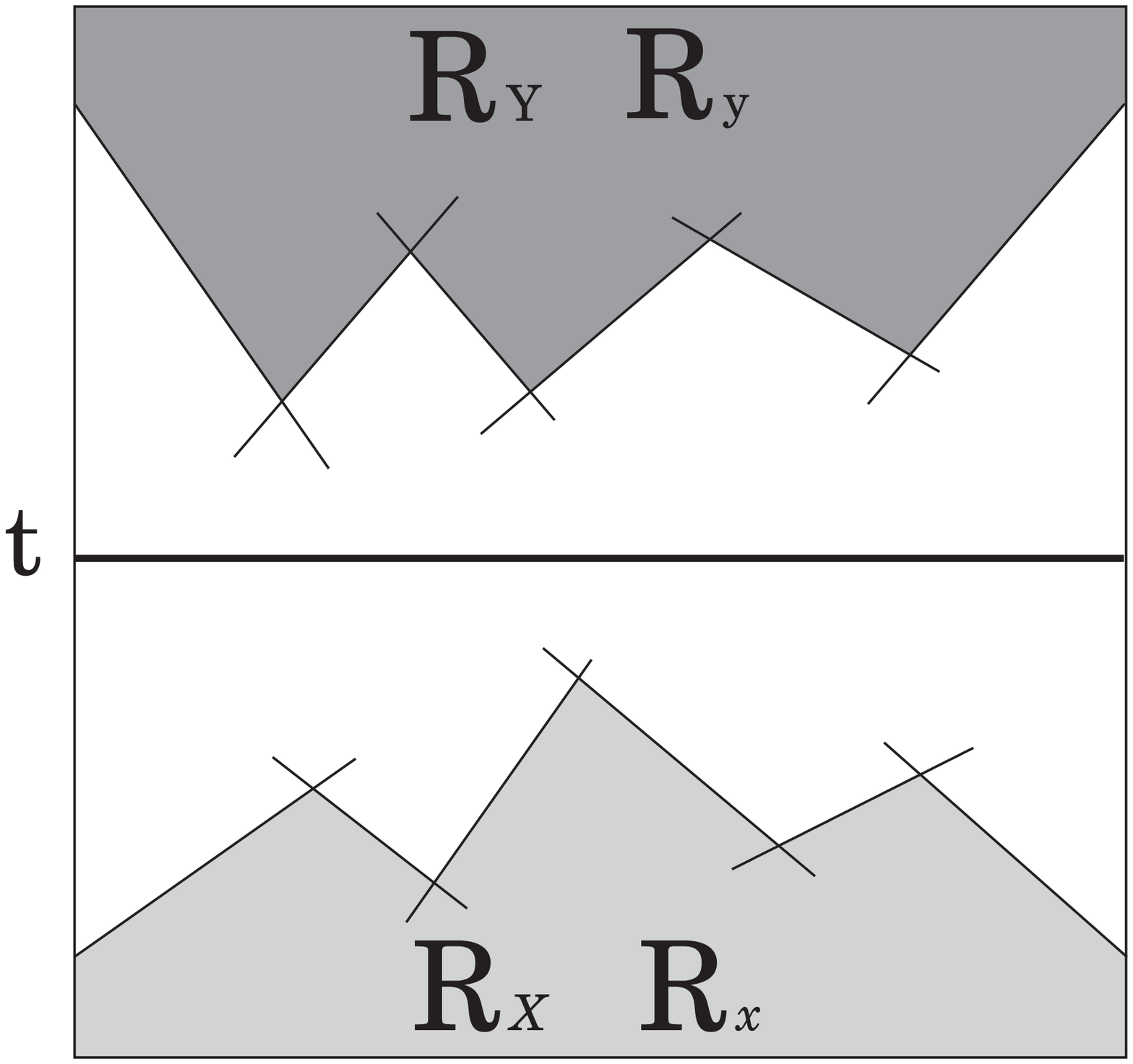}
 \end{center}
 \caption{}
\label{fig:three}
\end{figure}

We note that the arguments in the proof of Lemma \ref{h} work for the arc in Proposition \ref{b}. Hence we have: 

\begin{lemma}
If $g$ strongly splits $f$, there exists $t$ such that $I \times \{t \}$ is disjoint from $(R_{X}\cup R_{x})\cup(R_{Y}\cup R_{y})$ and the restriction of $f$ to $Q_{t}$ is a Morse function such that for each regular value $s$, $P_{s} \cap Q_{t}$ contains a simple closed curve which is essential on $P_{s}$.\label{strongly splits morse}
\end{lemma}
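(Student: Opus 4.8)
The plan is to rerun the proof of Lemma~\ref{h}, feeding it as its only geometric input the inclusions $R_{P>Q}\subseteq R_{X}\cup R_{x}$ and $R_{P<Q}\subseteq R_{Y}\cup R_{y}$ recorded in the paragraph preceding Proposition~\ref{b} (where one uses that $P$ and $Q$ are strongly irreducible). Since $g$ strongly splits $f$, there is a value $t$ with $I\times\{t\}$ disjoint from $(R_{X}\cup R_{x})\cup(R_{Y}\cup R_{y})$, and hence, by those two inclusions, disjoint from $R_{P>Q}\cup R_{P<Q}$ as well. As each of $R_{X},R_{x},R_{Y},R_{y}$ is closed, the set $T$ of such $t$ is a nonempty open subset of $(0,1)$.

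To arrange in addition that $f|_{Q_{t}}$ is a Morse function, I would use that the set of $t$ for which $f|_{Q_{t}}$ is Morse is open and dense in $(0,1)$: its complement is finite, the failures occurring only when $I\times\{t\}$ meets a vertex of $\Gamma$ (a birth--death vertex forces a degenerate critical point of $f|_{Q_{t}}$, a crossing vertex forces two of its critical points at a common level). This open dense set meets the nonempty open set $T$, and I fix $t$ in the intersection, so that $I\times\{t\}$ is disjoint from $(R_{X}\cup R_{x})\cup(R_{Y}\cup R_{y})$ and $f|_{Q_{t}}$ is Morse.

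It remains to show that, for every regular value $s$ of $f|_{Q_{t}}$, some component of $P_{s}\cap Q_{t}$ is essential on $P_{s}$; this is the step with real content, and the one I expect to have to handle carefully, even though the argument is Johnson's and is brief. For such an $s$ the point $(s,t)$ lies in the interior of a region, which by the choice of $t$ is disjoint from $R_{P>Q}\cup R_{P<Q}$, so $P_{s}$ is neither mostly above nor mostly below $Q_{t}$. Suppose, for a contradiction, that no component of $P_{s}\cap Q_{t}$ is essential on $P_{s}$. Then all curves of $P_{s}\cap Q_{t}$ are inessential on $P_{s}$, so $P_{s}\setminus Q_{t}$ has exactly one component $P_{s}^{*}$ not contained in a disk subset of $P_{s}$ (with $P_{s}^{*}=P_{s}$ when $P_{s}\cap Q_{t}=\emptyset$), every other component being contained in a disk subset. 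Being connected and disjoint from $Q_{t}$, $P_{s}^{*}$ lies in the interior of exactly one of the handlebodies $X_{t}$, $Y_{t}$, hence is disjoint from the other, say from $Y_{t}$ (the case of $X_{t}$ being symmetric). Then every component of $P_{s}\cap Y_{t}$ avoids $P_{s}^{*}$ and so is contained in a disk subset of $P_{s}$; that is, $P_{s}$ is mostly below $Q_{t}$, contradicting that $P_{s}$ is not mostly below $Q_{t}$. Hence $P_{s}\cap Q_{t}$ contains a simple closed curve essential on $P_{s}$, and the proof is complete. The only departure from Johnson's original argument is the replacement of $R_{P>Q}\cup R_{P<Q}$ by the larger forbidden set $(R_{X}\cup R_{x})\cup(R_{Y}\cup R_{y})$, which is harmless thanks to the two inclusions, so I anticipate no genuine obstacle.
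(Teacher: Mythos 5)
Your proof is correct and takes essentially the same route as the paper, whose entire proof is the remark that ``the arguments in the proof of Lemma~\ref{h} work for the arc in Proposition~\ref{b}'' --- i.e.\ exactly your observation that the inclusions $R_{P>Q}\subset R_{X}\cup R_{x}$ and $R_{P<Q}\subset R_{Y}\cup R_{y}$ let Johnson's argument run unchanged on the larger forbidden set. You have simply written out in full the two steps the paper leaves to \cite{J2} (perturbing $t$ off the finitely many vertex levels to get a Morse restriction while staying in the open set of admissible $t$, and the mostly-above/mostly-below dichotomy forcing an essential curve at each regular level), and both check out.
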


\begin{corollary}
Let $t$ be as in Lemma \ref{strongly splits morse}. 
There is a subarc $[s_{0}, s_{1}] \times \{t \} \subset I \times \{t \}$ such that:

\noindent 
\begin{itemize}
  \setlength{\parskip}{0cm} 
  \setlength{\itemsep}{0cm} 
 \item $(s_{0}, t) \in$ \{an edge of the graphic\}, 
 \item $(s_{1}, t) \in$ \{an edge of the graphic\}, and
 \item for any $s \in (s_{0}, s_{1})$, $(s, t) \in R_{\phi}$, and for any small $\epsilon > 0$, $((s_{0}-\epsilon), t) \in R_{A}$ and $((s_{1}+\epsilon), t) \in R_{B}$.
\end{itemize} \label{g}
\end{corollary}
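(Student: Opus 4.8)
The plan is to analyze the function $s \mapsto (s,t)$ along the arc $I \times \{t\}$, using the labelling of regions it passes through. By Lemma~\ref{strongly splits morse}, the arc $I \times \{t\}$ is disjoint from $(R_X \cup R_x) \cup (R_Y \cup R_y)$; combined with Lemma~\ref{near edge}, the endpoint $(0,t)$ lies in a region labelled $A$ or $a$, and since $R_{P>Q} \cup R_{P<Q}$ (hence $R_X \cup R_x \cup R_Y \cup R_y$) is avoided and a region labelled $a$ would mean $A$ contains an essential curve of $Q$... actually I should be careful: $a$ and $A$ are both allowed near $\{0\}\times I$. However, a region labelled $a$ has $C_P = C_Q = \emptyset$ and an essential curve of $Q$ lying in $B$; since the Morse condition in Lemma~\ref{strongly splits morse} guarantees that for each regular value $s$ the intersection $P_s \cap Q_t$ contains a curve essential on $P_s$, we get $C_P \neq \emptyset$, so no region the arc passes through (at a regular value) can be labelled $a$, $b$, $x$, or $y$. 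Thus the only labels occurring along the open arc are $A$, $B$, $X$, $Y$, and the latter two are excluded. So for each regular value $s$, the region containing $(s,t)$ is labelled $A$, labelled $B$, or unlabelled.

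First I would establish that $(0,t)$ is in a region labelled $A$ and $(1,t)$ is in a region labelled $B$: near $\{0\}\times I$ it is $A$ or $a$, and $a$ is excluded by the previous paragraph (for small $\epsilon$, $(\epsilon, t)$ is a regular value in the same region or an adjacent one, and that region must carry label $A$ since $C_P \neq \emptyset$ rules out $a$; a birth-death or crossing vertex can be avoided by genericity of $f\times g$ and by perturbing $t$ slightly if needed, or one notes the region adjacent to the edge is what matters). Symmetrically $(1,t)$ lies in a region labelled $B$. Next I would walk along the arc from $s=0$ increasing $s$, and let $s_1$ be the infimum of values $s$ such that $(s,t)$ lies in a region labelled $B$ (this set is nonempty since it contains values near $1$, and it is relatively open in the regular values, so its infimum $s_1$ is attained as a boundary point, i.e. $(s_1,t)$ lies on an edge of the graphic). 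Then let $s_0$ be the supremum of values $s < s_1$ such that $(s,t)$ lies in a region labelled $A$; this set is nonempty (contains values near $0$) so $s_0$ is well-defined, $s_0 < s_1$, and $(s_0,t)$ lies on an edge of the graphic. By construction, for $s \in (s_0,s_1)$ the point $(s,t)$ is in no region labelled $A$ (by maximality of $s_0$ within $[0,s_1)$) and in no region labelled $B$ (by minimality of $s_1$), nor $X,Y,a,b,x,y$; hence $(s,t) \in R_\phi$. Finally, just to the left of $s_0$ the arc re-enters an $A$-region (by definition of $s_0$ as a supremum of such values, and since between that $A$-region and $s_0$ there is no label change there is an $A$-region immediately adjacent to the edge at $s_0$), and just to the right of $s_1$ it enters a $B$-region similarly.

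The main obstacle I expect is the bookkeeping at the edge points $s_0$ and $s_1$: one must ensure that $(s_0,t)$ and $(s_1,t)$ actually land on \emph{edges} of the graphic rather than on vertices, and that the regions immediately adjacent on the appropriate sides carry the claimed labels $A$ and $B$. Genericity of $f\times g$ (each horizontal arc $I\times\{t\}$ meets at most one vertex, Section~3) together with the freedom to choose $t$ slightly — recall $t$ ranges over an open set of values satisfying the conclusion of Lemma~\ref{strongly splits morse}, since that conclusion is an open condition — lets me assume $I\times\{t\}$ contains no vertex of the graphic at all, so every boundary point between consecutive regions along the arc is an interior point of an edge. Then adjacency of the regions is automatic. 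One also needs that $s_0$ and $s_1$ are not themselves regular-value interior points of a single region (they are not, by the extremality definitions), and that the "small $\epsilon$" statement is uniform; both follow because the arc meets only finitely many regions and edges.

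One subtlety to double-check: I claimed no region met by the arc at a regular value can be unlabelled-adjacent-to-$a$; more precisely, I use that between $s=0$ and the first $B$-region the arc passes only through $A$-regions and $R_\phi$. This is exactly the trichotomy (labelled $A$ / labelled $B$ / unlabelled) established in the first paragraph, so $s_0$ being a supremum of $A$-values and $s_1$ an infimum of $B$-values forces the open interval strictly between them into $R_\phi$. This completes the proof.
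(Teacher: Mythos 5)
Your overall strategy is the same as the paper's: rule out all labels except $A$, $B$ and unlabelled along $I\times\{t\}$, then take $s_{1}$ as the first entry into a $B$-region and $s_{0}$ as the last exit from an $A$-region before that. Your way of excluding the labels $a,b,x,y$ (the Morse condition of Lemma \ref{strongly splits morse} forces $C_{P}\neq\emptyset$ at every regular value, while those labels require $C_{P}=C_{Q}=\emptyset$) is a legitimate variant of the paper's argument (which instead observes that a region labelled $a$ or $b$ would also be labelled $x$ or $y$, contradicting disjointness from $R_{x}\cup R_{y}$). Your handling of vertices via genericity is also fine.

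There is, however, one genuine gap: you assert $s_{0}<s_{1}$ without justification. Nothing in your construction prevents $s_{0}=s_{1}$, i.e.\ the arc passing directly from an $A$-region to a $B$-region across a single edge of the graphic, in which case the interval $(s_{0},s_{1})$ is empty and there is no unlabelled subarc at all --- and the nonempty unlabelled interval is precisely the content needed later to define $Q^{*}$ and $G^{*}$. Ruling this out is exactly where strong irreducibility of $P$ enters: if $s_{0}=s_{1}$, the $A$-region immediately to the left and the $B$-region immediately to the right would be adjacent along the edge through $(s_{0},t)$, which by Lemma \ref{two adjacent regions} would make $A\cup_{P}B$ weakly reducible, a contradiction. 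The paper cites this lemma explicitly at this step; your proof never uses it, so the nondegeneracy of $[s_{0},s_{1}]$ is left unproved.
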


\begin{proof}
By Lemma \ref{strongly splits morse}, $I \times \{t \}$ is disjoint from $(R_{X}\cup R_{x})\cup(R_{Y} \cup R_{y})$. By Lemma \ref{near edge}, a neighborhood of $(0,t)$ (resp. $(1,t)$) in $[0,1] \times \{t \}$ is contained in $R_{A}\cup R_{a}$ (resp. $R_{B}\cup R_{b}$). For an $s\in(0,1)$, if $(s,t)$ is contained in $R_{a}$ or $R_{b}$, then $(s, t)$ is contained in $R_{x}$ or $R_{y}$, a contradiction. Hence for a small $\epsilon>0$, $(\epsilon,t)$ (resp. $(1-\epsilon,t)$) is contained in $R_{A}$ (resp. $R_{B}$). Let $s_{1}=\sup\{s{\mid[0,s]\times\{t\}\in{R_{A}{\cup}R_{\phi}}}\}$ and $s_{0}=\sup\{s<s_{1}{\mid(s,t)\in{R_{A}}}\}$. Then by Lemma \ref{two adjacent regions}, $s_{0}{\neq}s_{1}$, and it is clear that the conclusion of Corollary 5.6 holds. 
\end{proof}

\section{The Reeb Graph}

Given a compact, orientable surface $F$, let $\varphi$ : $F \rightarrow \mathbb{R}$ be a smooth function such that $\varphi\mid_{int(F)}$ is a Morse function and each component of $\partial F$ is level. 
Define the equivalence relation $\sim$ on points on $F$ by $x \sim y$ whenever $x, y$ are in the same component of a level set of $\varphi$. The \textit{Reeb graph} corresponding to $\varphi$ is the quotient of $F$ by the relation $\sim$.
As suggested by the name, the Reeb graph $F'=F/\sim$ is a graph such that 
the edges of $F'$ come from annuli in $F$ fibered by level loops, and that the valency one vertices correspond to center singularities, and the valency three vertices correspond to saddle singularities. 

Let $f$ and $g$ be as in Section 5. Suppose that $Q$ is not isotopic to $P$, and we take $t$ as in Lemma \ref{strongly splits morse}. Let $G$ be the Reeb graph corresponding to $f\mid_{Q_{t}}$. 
There are two types of edges in $G$. If each point of an edge corresponds to an essential simple closed curve on $Q_{t}$, then the edge is called an \textit{essential edge}. 
If each point of the edge corresponds to an inessential simple closed curve on $Q_{t}$, then the edge is called an \textit{inessential edge}.

\begin{figure}[htb]
  \begin{minipage}[t]{.47\textwidth}
    \centering
    \includegraphics[width=36mm]{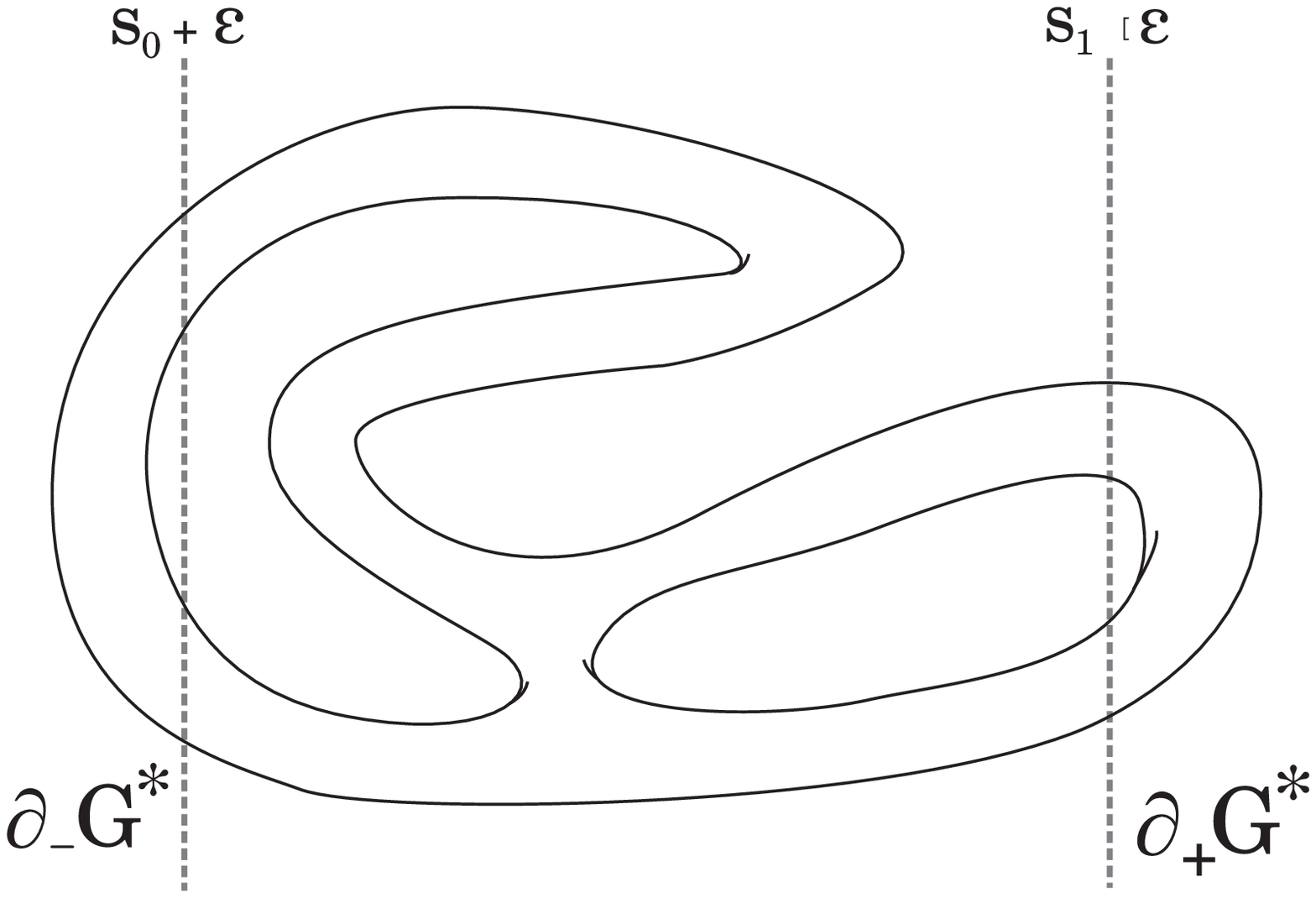}
    \caption{}
    \label{fig:one}
  \end{minipage}
  \hfill
  \begin{minipage}[t]{.47\textwidth}
    \centering
    \includegraphics[width=36mm]{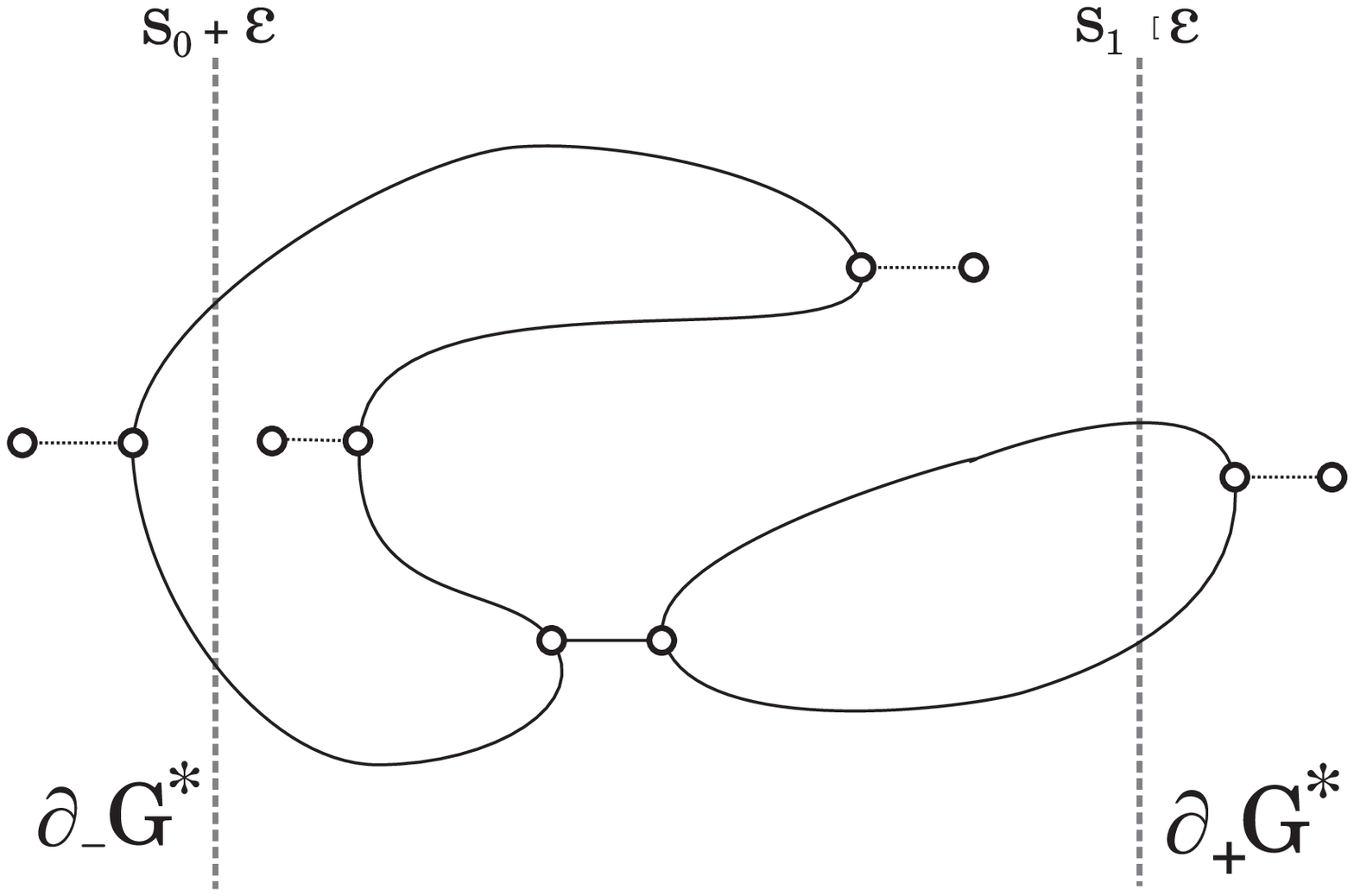}
    \caption{}
    \label{fig:two}
  \end{minipage}
\end{figure}

We continue with hypotheses of Section 5. Particularly, let $s_{0}, s_{1}, t$ be as in Corollary \ref{g}, hence, $[s_{0}, s_{1}] \times \{ t\}$ is an unlabelled interval in horizontal arc. For a small $\epsilon>0$, let $Q^{*}=f^{-1}(s_{0}+\epsilon,s_{1}-\epsilon)\cap Q_{t}$.
Then $G^{*}$ denotes the Reeb graph corresponding to $f\mid_{Q^{*}}:Q^{*}\rightarrow I$. We say that a vertex of $G^{*}$ corresponding to a component of $\partial Q^{*}$ is a \textit{$\partial$-vertex}. In particular, 
if a $\partial$-vertex corresponds to a component of $f^{-1}(s_{0} + \epsilon) \cap Q_{t}$ (resp. $f^{-1}(s_{1} - \epsilon) \cap Q_{t}$), then it is called a \textit{$\partial_{-}$-vertex} (resp. \textit{$\partial_{+}$-vertex}). The union of $\partial_{-}$-vertices (resp. $\partial_{+}$-vertices) is denoted by $\partial_{-}G^{*}$ (resp. $\partial_{+}G^{*}$). Let $f^{*}:G^{*}\rightarrow[s_{0}+\epsilon,s_{1}-\epsilon]$ be the function induced from $f\mid_{Q^{*}}:Q^{*}\rightarrow[s_{0}+\epsilon,s_{1}-\epsilon]$. Note that for each $s\in(s_{0}+\epsilon,s_{1}-\epsilon)$, $f^{*-1}(s) (\subset{G^{*}})$ consists of a finite number of points corresponding to the components of $P_{s}\cap{Q_{t}}$. Since $(s,t)$ is contained in an unlabelled region, there exists a component of $P_{s}{\cap}Q_{t}$ which is essential on both surfaces. This implies the next proposition. 

\begin{proposition}
Let $e$ be an inessential edge of $G^{*}$. For each $x \in e$, there exists an essential edge $e'$ of $G^{*}$ such that $f^{*}(x) \in f^{*} (e')$.
\end{proposition}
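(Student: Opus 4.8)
The plan is to prove the slightly stronger statement that the set
$T := \bigcup\{\, f^{*}(e') \mid e' \text{ an essential edge of } G^{*}\,\}$
is all of $[s_{0}+\epsilon,\,s_{1}-\epsilon]$; the proposition then follows at once, since for any $x\in e$ we have $f^{*}(x)\in[s_{0}+\epsilon,\,s_{1}-\epsilon]=T$, so $f^{*}(x)\in f^{*}(e')$ for some essential edge $e'$.

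First I would record two soft structural facts. (i) The restriction of $f^{*}$ to each closed edge of $G^{*}$ is a homeomorphism onto a compact subinterval of $[s_{0}+\epsilon,\,s_{1}-\epsilon]$; since $G^{*}$ has finitely many edges, $T$ is a finite union of compact intervals, hence a closed subset of $[s_{0}+\epsilon,\,s_{1}-\epsilon]$. (ii) The notion ``essential edge'' is well defined and is detected by any single interior point: the points of one edge of $G^{*}$ arise from the level loops of $f\mid_{Q^{*}}$ that sweep out an annulus in $Q^{*}\subset Q_{t}$, so these loops are mutually isotopic in $Q_{t}$ and are therefore either all essential or all inessential on $Q_{t}$. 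Consequently an edge of $G^{*}$ is essential if and only if one (equivalently, every) interior point of it corresponds to a simple closed curve essential on $Q_{t}$.

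The heart of the argument is to show $(s_{0}+\epsilon,\,s_{1}-\epsilon)\subseteq T$. Fix $s\in(s_{0}+\epsilon,\,s_{1}-\epsilon)$; then $(s,t)$ lies in an unlabelled region, so $P_{s}\cap Q_{t}$ contains a simple closed curve $c$ essential on both $P_{s}$ and $Q_{t}$. Since $c\subset f^{-1}(s)$ and $s$ is interior to $(s_{0}+\epsilon,\,s_{1}-\epsilon)$, we get $c\subset f^{-1}(s_{0}+\epsilon,\,s_{1}-\epsilon)\cap Q_{t}=Q^{*}$, so $c$ determines a point $p:=[c]\in (f^{*})^{-1}(s)\subset G^{*}$. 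I would then argue that $p$ is interior to an edge of $G^{*}$: the vertices of $G^{*}$ are the $\partial$-vertices (ruled out, as they correspond to components of $\partial Q^{*}$ at levels $s_{0}+\epsilon$ or $s_{1}-\epsilon$, whereas $s$ is an interior value), the centers of $f\mid_{Q^{*}}$ (ruled out, as a center bounds a disk in $Q^{*}\subset Q_{t}$ while $c$ is essential on $Q_{t}$), and the saddles of $f\mid_{Q^{*}}$ (ruled out, as the singular level component through a saddle is a wedge of two circles, not a simple closed curve, while $c$ is a genuine simple closed curve). Hence $p$ lies in the interior of some edge $e'$, which is essential by fact (ii) because $c$ is essential on $Q_{t}$; thus $s=f^{*}(p)\in f^{*}(e')\subseteq T$. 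Since $T$ is closed and contains the dense open interval $(s_{0}+\epsilon,\,s_{1}-\epsilon)$, it contains its closure $[s_{0}+\epsilon,\,s_{1}-\epsilon]$, completing the proof.

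The only place where any real geometry is used — and the step I expect to be the main obstacle to state cleanly — is the verification that the point $[c]$ coming from an essential curve is interior to an edge rather than a vertex of $G^{*}$; everything else is the formal observation that a finite union of compact intervals is closed and contains a dense open set. A secondary point worth a sentence is the well-definedness of ``essential edge'' via isotopy of the level loops within $Q_{t}$, which is implicit in the definitions but should be made explicit here.
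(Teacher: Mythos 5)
Your proposal is correct and follows essentially the same route as the paper, whose entire proof is the remark immediately preceding the proposition: for each $s$ in the interval, the point $(s,t)$ lies in an unlabelled region, so $P_{s}\cap Q_{t}$ has a component essential on both surfaces, which yields an essential edge of $G^{*}$ over $s$. The extra point-set details you supply (closedness of $T$ to handle the finitely many critical levels and the endpoints, and the verification that $[c]$ is interior to an edge) are a sound elaboration of what the paper leaves implicit.
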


We consider local configurations of essential edges and inessential edges near a valency three vertex. 
At a valency three vertex, we may regard that an edge branches away two edges or that two edges are bound into one according to the parameter $s$ $(\in (s_{0}+\epsilon, s_{0}-\epsilon))$. 
We first consider the case of branching away (Figure 4). We take a point (e.g. $x_{1}, x_{2}$ and $x_{3}$) in each edge adjacent to the vertex as in Figure 4. Then $c_{x_{i}}$ denotes the simple closed curve on $Q_{t}$ corresponding to $x_{i}$. Each $c_{x_{i}}$ is either essential or inessential on $Q_{t}$. 
We naively have six cases up to reflection in horizontal line. 
But the two cases in Figure~4 do not occur, because it is easy to see that if $c_{x_{1}}$ and $c_{x_{2}}$ (resp. $c_{y_{2}}$ and $c_{y_{3}}$) in Figure~\ref{valency three branch} are inessential simple closed curves on $Q_{t}$, then $c_{x_{3}}$ (resp.  $c_{y_{1}}$) is also inessential.

\begin{figure}[htbp]
 \begin{center}
  \includegraphics[width=55mm]{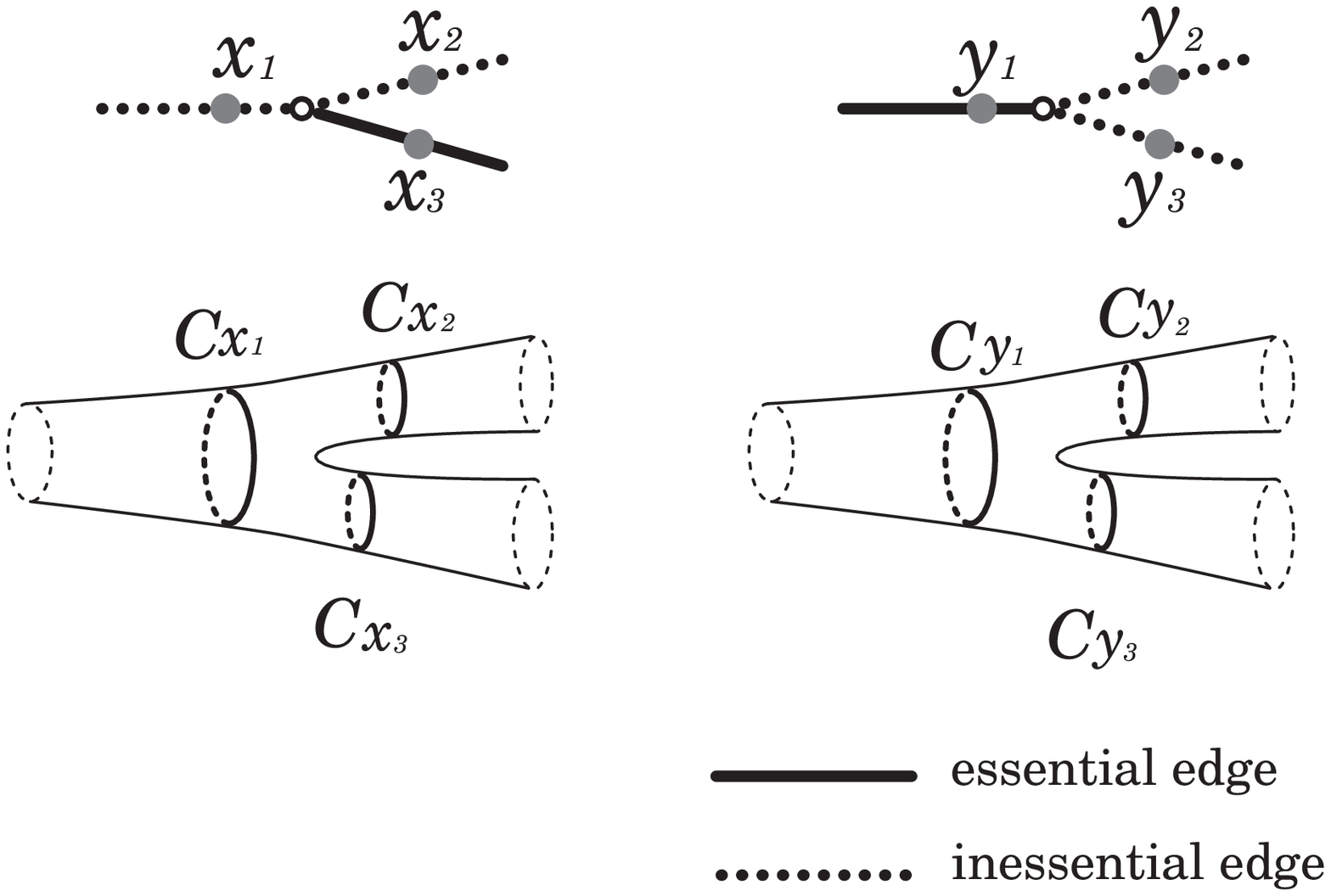}
 \end{center}
 \caption{}
 \label{valency three branch}
\end{figure}

Hence, the possible patterns of essential and inessential edges in neighborhoods of the vertices are shown in Figure~\ref{edge pattern} (1)--(4).

Type~(1) shows that an essential edge branches away two essential edges and type~(2) shows that an inessential edge branches away two essential edges. Type~(3) shows that an essential edge branches away an essential edge and an inessential edge and type~(4) shows that an inessential edge branches away two inessential edges. 

Then we consider the case of binding into one edge. It is clear that possible cases are obtained from type~(1)--(4) configurations by a horizontal reflection, which are shown in Figure~\ref{edge pattern} (5)--(8).

\vspace{3mm}

\begin{figure}[htbp]
 \begin{center}
  \includegraphics[width=50mm]{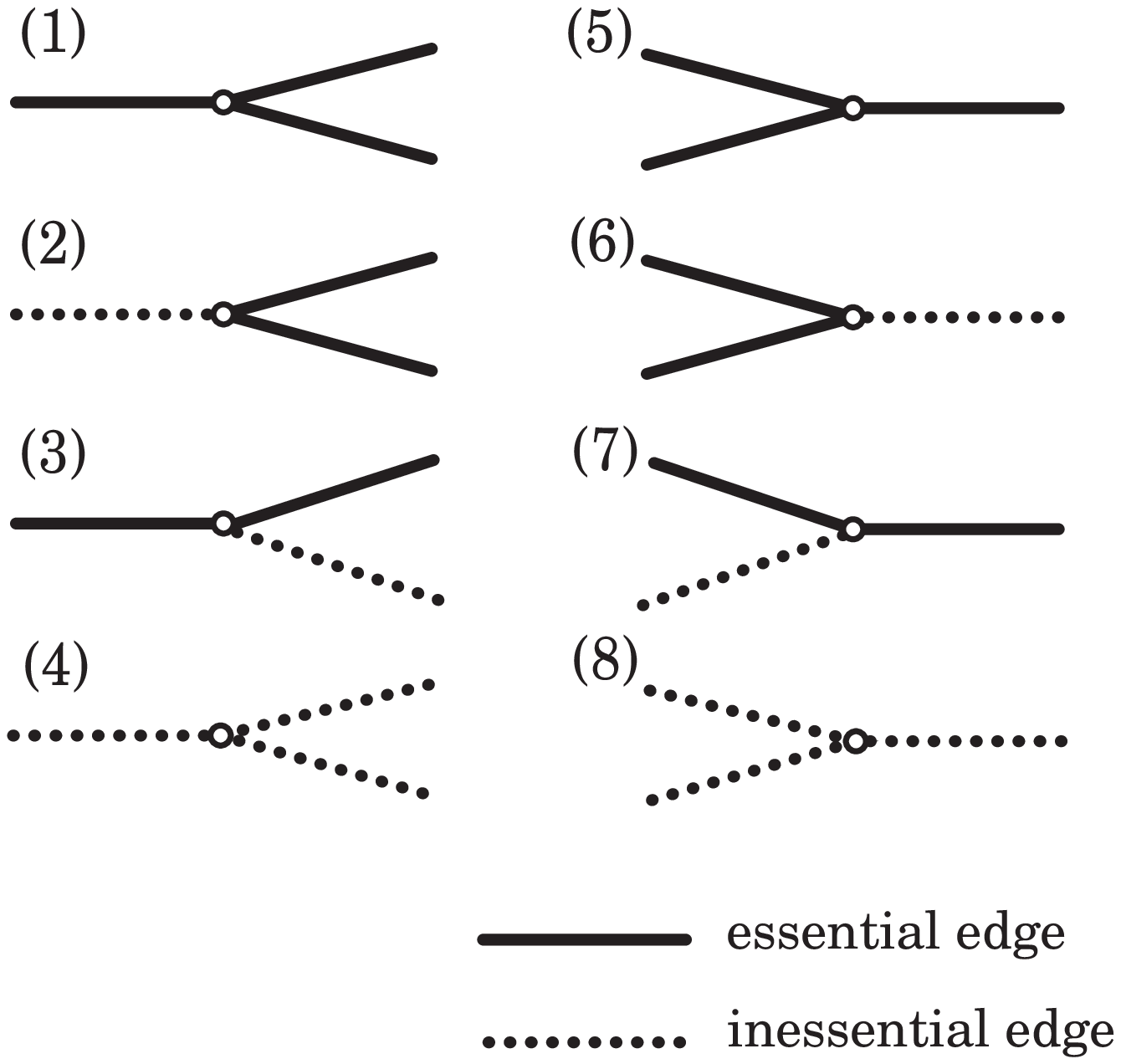}
 \end{center}
 \caption{}
 \label{edge pattern}
\end{figure}

\section{An estimation of Hempel distance}

\noindent
\textbf{Assigning positive integers to essential edges}

\vspace{1mm}
Let $G^{*}$, $\partial_{\pm}G^{*}$, $f^{*}$ be as in Section~6. 
Let $G^{*}_{e}$ be the subgraph of $G^{*}$ consisting of the essential edges of $G^{*}$. Then $\partial_{\pm}G^{*}_{e}$ denotes the vertices of $G^{*}_{e}$ corresponding to $\partial_{\pm}G^{*}$. We assign a positive integer to each edge of $G^{*}_{e}$ according to the following steps. Let $v_{1},\dots, v_{k}$ be the vertices of $G^{*}_{e}$ which are not $\partial$-vertices. 
We suppose that $v_{1},\dots, v_{k}$ are positioned in this order from the left, i.e., $f^{*}(v_{1}) < f^{*}(v_{2}) < \dots <f^{*}(v_{k})$. 

\vspace{5mm}

\begin{figure}[htbp]
\begin{center}
  \includegraphics[width=60mm]{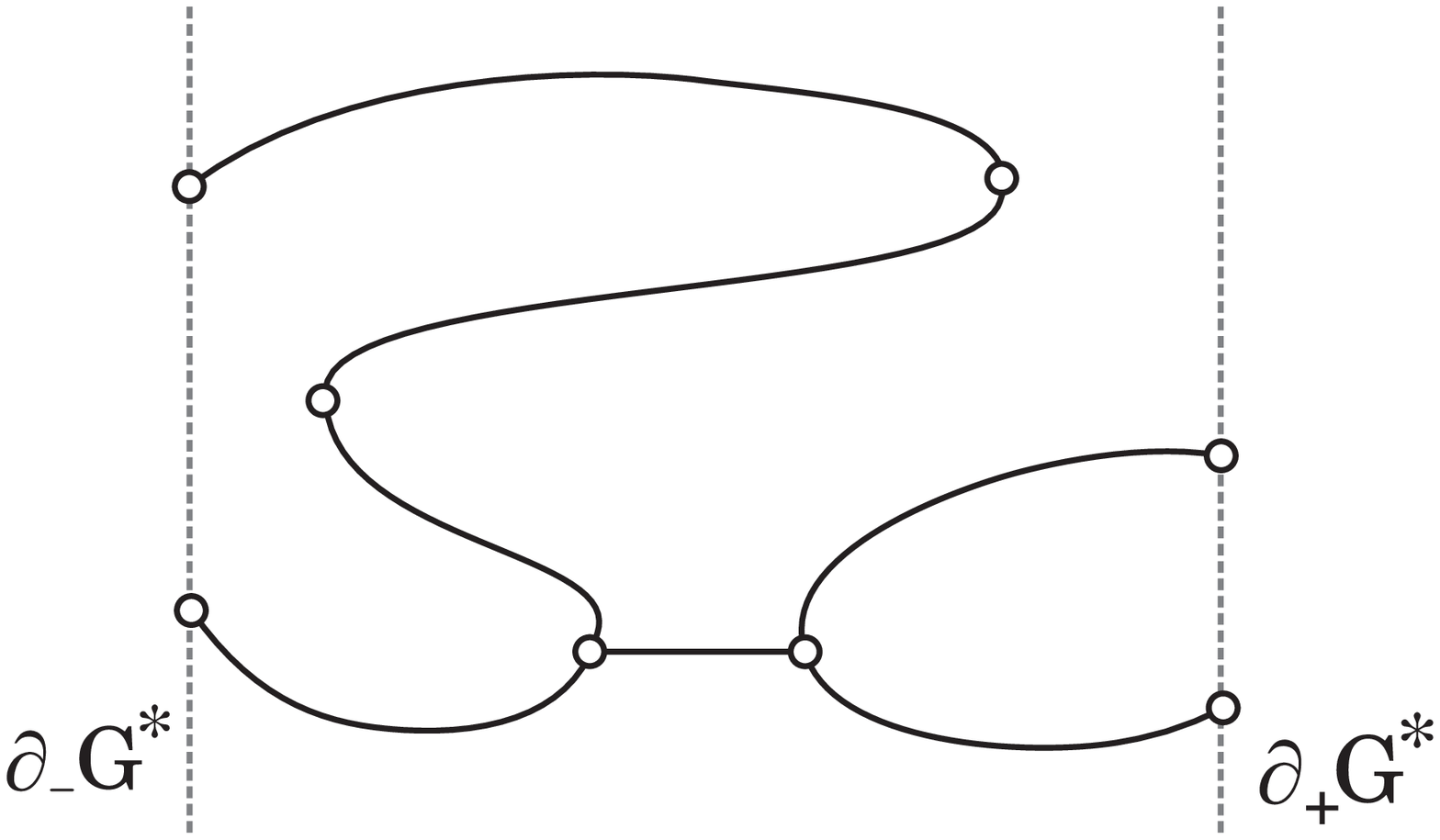}
 \end{center}
 \caption{}
 \label{fig:three}
\end{figure}

Now we define Steps~0, 1 and 2 inductively for assigning positive integers to the edges of $G^{*}_{e}$. 
\vspace{2mm}

\noindent
\textbf{Step 0.}
We assign $1$ to every edge adjacent to $\partial_{-} G^{*}_{e}$.

\begin{figure}[htbp]
\begin{center}
  \includegraphics[width=22mm]{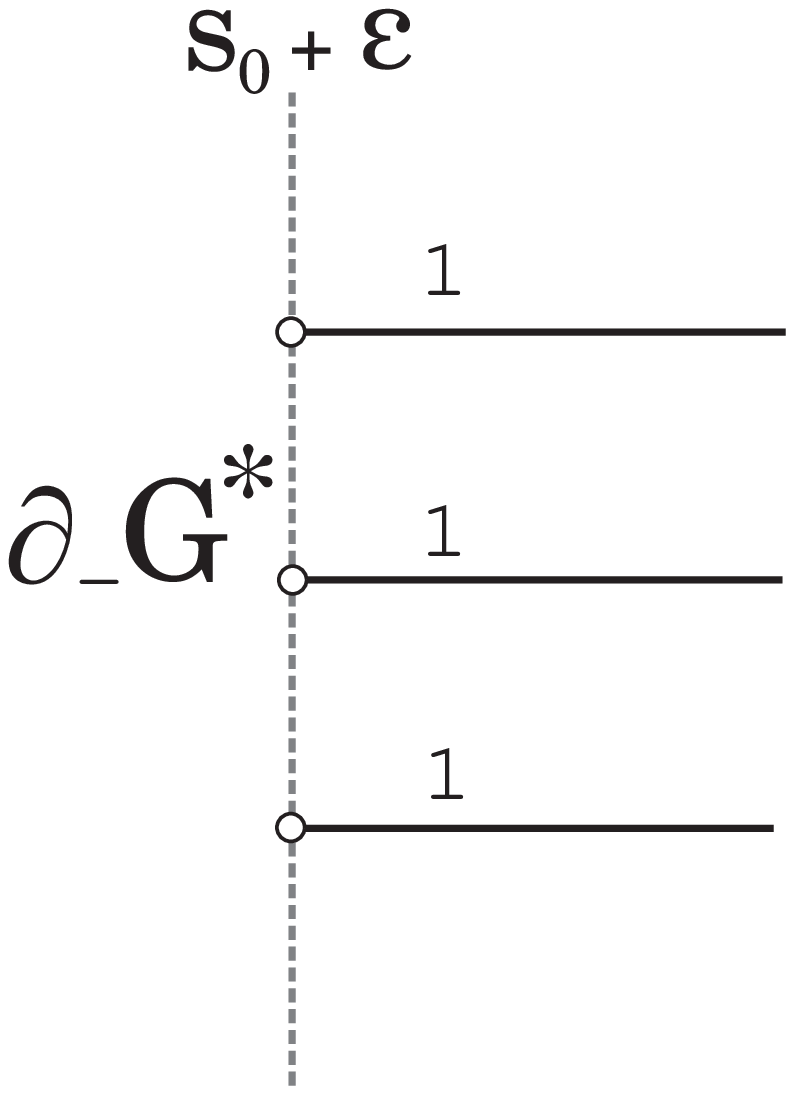}
 \end{center}
 \caption{}
 \label{fig:three}
\end{figure}

\vspace{2mm}

\noindent
\textbf{Step 1.}
Suppose that there is a valency two vertex $v_{i}$ adjacent to edges 
$e_{l}, e_{l'}$ such that $e_{l}$ has already been assigned and $e_{l'}$ has not been assigned yet. Then we assign the same integer as that of $e_{l}$ to $e_{l'}$. We apply this assignment as much as possible.

\begin{figure}[htbp]
\begin{center}
  \includegraphics[width=45mm]{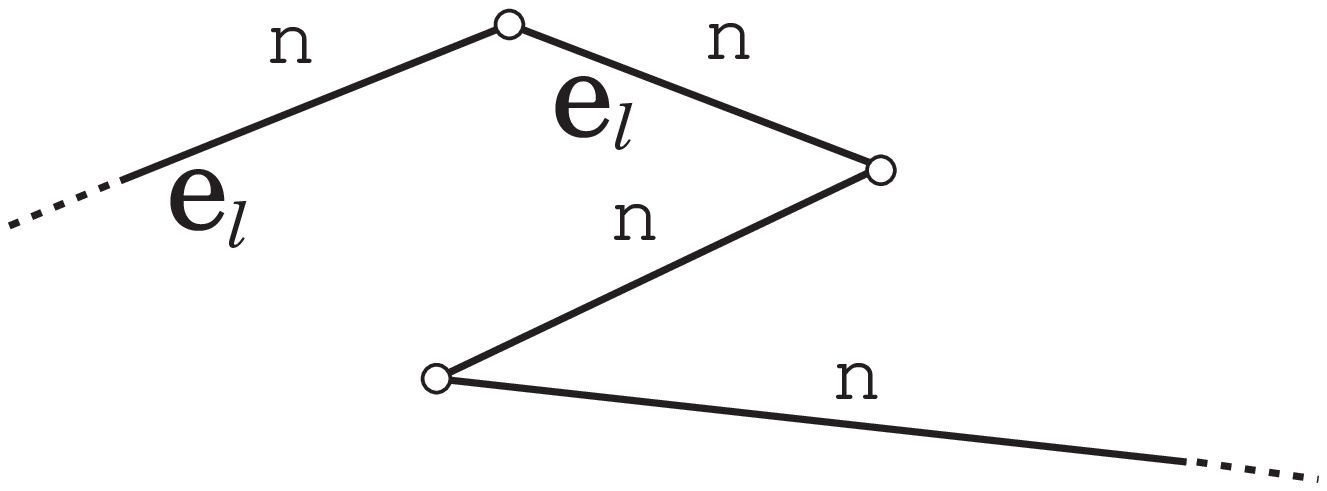}
 \end{center}
 \caption{}
 \label{fig:three}
\end{figure}

\vspace{2mm}

In our assigning process, we will repeat applications of Steps~1 and 2. Before describing Step~2, we will give a general condition that the assignments have in the process. 
Suppose we finish Step~1 in repeated applications of Steps~1 and 2. 
At this stage, either every edge of $G^{*}_{e}$ is assigned exactly one integer, or there is a unique vertex $v_{i}$ such that there is an unassigned edge adjacent to $v_{i}$, and that each edge of $G_{e}^{*}$ containing a point $x$ with $f^{*}(x)<f^{*}(v_{i})$ has already been assigned exactly one integer.
Then we suppose that the assigned integers satisfy the following conditions (*) and (**). (Note that the conditions are clearly satisfied after Steps~0 and 1.)

\medskip
\noindent
\textbf{(*)} For a small $\epsilon>0$, let $L_{i}$ be the set of the edges of $G^{*}_{e}$ each containing a point $x$ with $f^{*}(x)=f^{*}(v_{i})-\epsilon$. Then it satisfies one of the following conditions:

\medskip

\noindent
\textbf{(1)} All of the elements of $L_{i}$ are assigned with the same integer, say $n$.

\begin{figure}[htb]
  \begin{minipage}[t]{.47\textwidth}
    \centering
    \includegraphics[width=23mm]{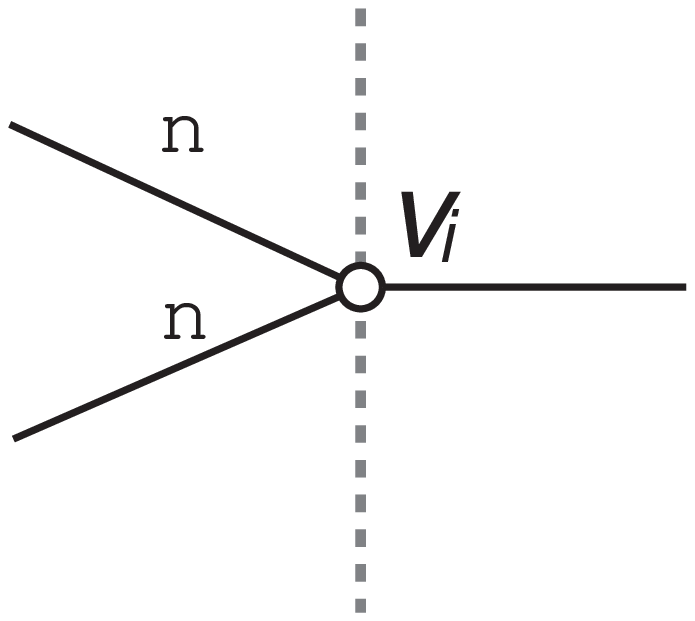}
    \caption{}
    \label{fig:one}
  \end{minipage}
  \hfill
  \begin{minipage}[t]{.47\textwidth}
    \centering
    \includegraphics[width=23mm]{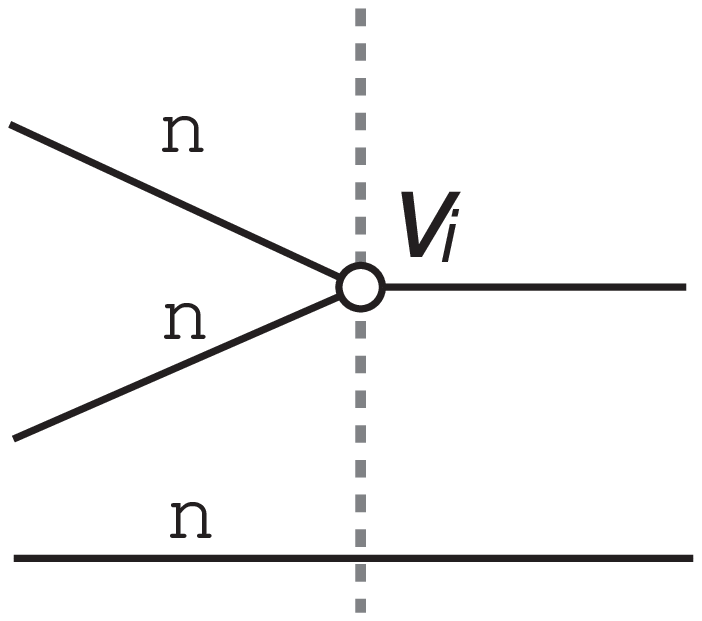}
    \caption{}
    \label{fig:two}
  \end{minipage}
\end{figure}

\noindent
\textbf{(2)} The set of the integers assigned to the elements of $L_{i}$ consists of consecutive two integers, say $n-1$ and $n$.

\begin{figure}[htb]
  \begin{minipage}[t]{.47\textwidth}
    \centering
    \includegraphics[width=23mm]{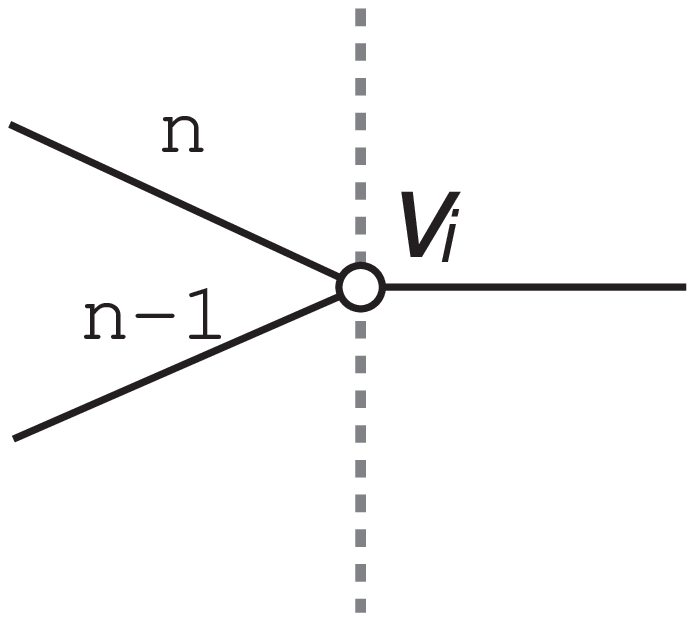}
    \caption{}
    \label{fig:one}
  \end{minipage}
  \hfill
  \begin{minipage}[t]{.47\textwidth}
    \centering
    \includegraphics[width=23mm]{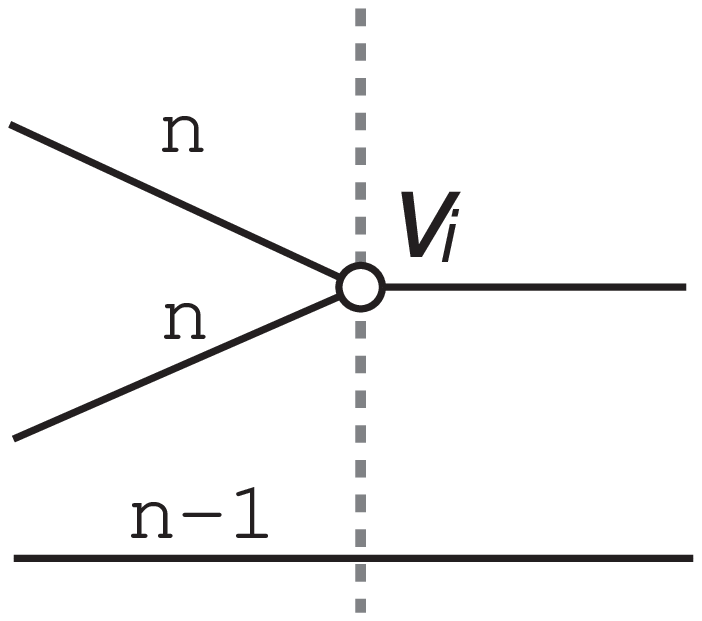}
    \caption{}
    \label{fig:two}
  \end{minipage}
\end{figure}

\medskip

\noindent
\textbf{(**)} Moreover, if there exists an assigned edge of $G_{e}^{*}$ containing a point $p_{+}$ with $f^{*}(p_{+})>f^{*}(v_{i})$, then each such edge is assigned $n-1$ or $n$ as above. In particular, for a point $p_{-}$ with $f^{*}(p_{-})>f^{*}(v_{i})-\epsilon$, if all of the assigned edges each containing a point $x$ with $f^{*}(x)=f^{*}(p_{-})$ are assigned the same integer $n'$ ($=n-1$ or $n$), then we have: 
\vspace{1.2mm}

\noindent
\textbf{(i)} all of the assigned edges containing a point $x_{+}$ with $f^{*}(x_{+})>f^{*}(p_{-})$ are assigned $n'$, and  

\noindent
\textbf{(ii)} if a point $x^{*}$ with $f^{*}(x^{*})>f^{*}(p_{-})$ is contained in an assigned edge, then there is a point $x'$ with $f^{*}(x')=f^{*}(p_{-})$ and a path $\mathcal{P}$ in $G_{e}^{*}$ such that $\mathcal{P}$ is contained in a union of edges assigned $n'$, and joins the points $x'$ and $x^{*}$. 
\medskip

\begin{figure}[htbp]
 \begin{center}
  \includegraphics[width=50mm]{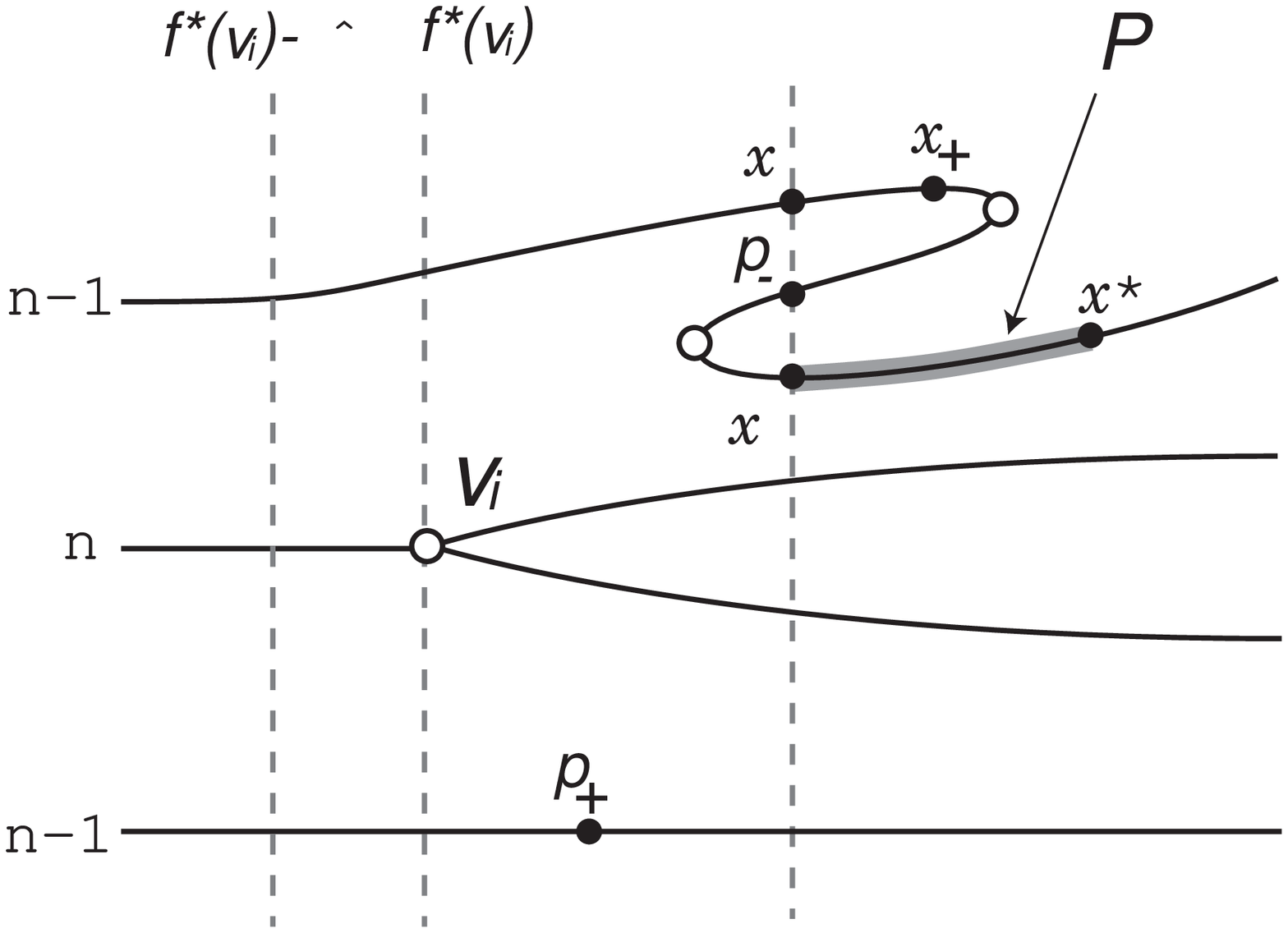}
 \end{center}
 \caption{}
 \label{}
\end{figure}

\noindent
\textbf{Step~2.} 

\medskip

\noindent
\textbf{1.} Suppose that the vertex $v_{i}$ satisfies the condition (1). 
Then we assign $n+1$ to the unassigned edge(s) adjacent to $v_{i}$.

\begin{figure}[htbp]
 \begin{center}
  \includegraphics[width=108mm]{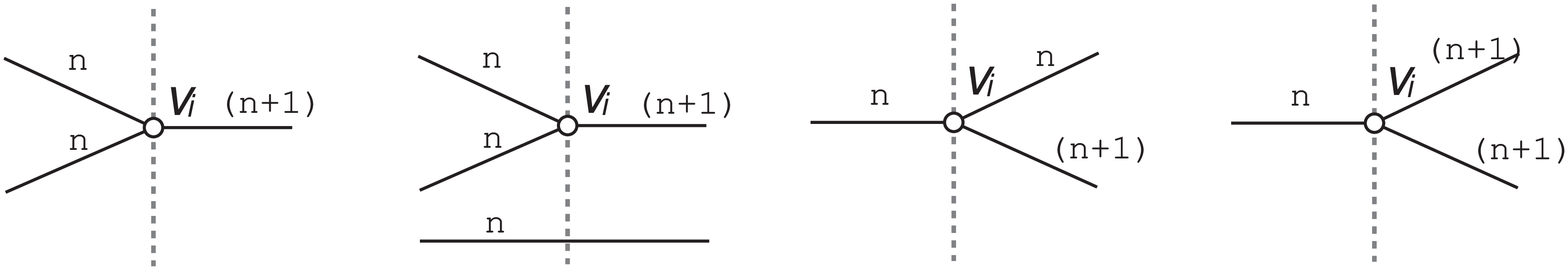}
 \end{center}
 \caption{}
 \label{}
\end{figure}

\noindent
\textbf{2.} Suppose that the vertex $v_{i}$ satisfies the condition (2). 
Then we assign $n$ to the unassigned edge(s) adjacent to $v_{i}$.

\begin{figure}[htbp]
 \begin{center}
  \includegraphics[width=108mm]{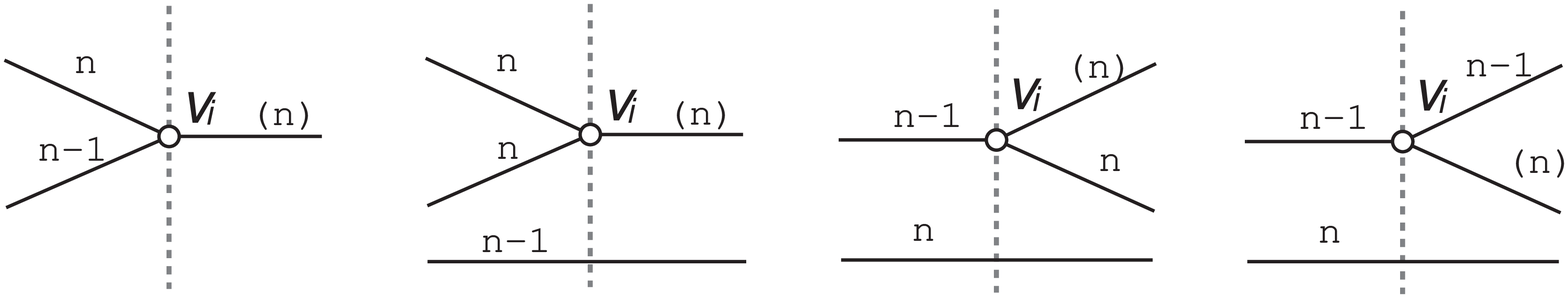}
 \end{center}
 \caption{}
 \label{}
\end{figure}

\medskip

After finishing Step~2, we apply Step~1. Then either all of the edges of $G^{*}_{e}$ are assigned integer(s), or there is a unique vertex $v_{j}$ such that there is an unassigned edge adjacent to $v_{j}$, and that each edge of $G^{*}_{e}$ that contains a point $y$ such that $f^{*}(y)<f^{*}(v_{j})$ has already been assigned. Here we note that there are no multiple assignments of integers for an edge and the above conditions (*) and (**) hold for the new assignment. That is:

\medskip

\begin{lemma}
At the stage, each edge of $G^{*}_{e}$ is assigned at most one integer, and we have the following. 

\medskip
\noindent
\textbf{(*)$'$} For a small $\epsilon>0$, let $L_{j}$ be the set of the edges of $G^{*}_{e}$ each containing a point $y$ with $f^{*}(y)=f^{*}(v_{j})-\epsilon$. Then the assigned integers satisfy one of the following conditions:
\vspace{1.5mm}

\noindent
\textbf{(1)$'$} All of the elements of $L_{j}$ are assigned with the same integer, say $m$.

\noindent
\textbf{(2)$'$} The set of the integers assigned to the elements of $L_{j}$ consists of consecutive two integers, say $m-1, m$.

\medskip
\noindent
\textbf{(**)$'$} Moreover, if there exists an assigned edge of $G_{e}^{*}$ containing a point $q_{+}$ with $f^{*}(q_{+})>f^{*}(v_{j})$, then each such edge is assigned $m-1$ or $m$ as above. In particular, for a point $q_{-}$ with $f^{*}(q_{-})>f^{*}(v_{j})-\epsilon$, if all of the assigned edges each containing a point $y$ with $f^{*}(y)=f^{*}(q_{-})$ are assigned the same integer $m'$ ($=m-1$ or $m$), then we have: 
\vspace{2mm}

\noindent
\textbf{(i)$'$} all of the assigned edges containing a point $y_{+}$ with $f^{*}(y_{+})>f^{*}(q_{-})$ are assigned $m'$, and  

\noindent
\textbf{(ii)$'$} if a point $y^{*}$ with $f^{*}(y^{*})>f^{*}(q_{-})$ is contained in an assigned edge, then there is a point $y'$ with $f^{*}(y')=f^{*}(q_{-})$ and a path $\mathcal{Q}$ in $G_{e}^{*}$ such that $\mathcal{Q}$ is contained in a union of edges assigned $m'$, and joins the points $y'$ and $y^{*}$.\label{claim} 
\end{lemma}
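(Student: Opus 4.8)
The assertion is that the pair of conditions \textbf{(*)} and \textbf{(**)} is an invariant of the assignment procedure, preserved by one round consisting of an application of \textbf{Step 2} at the distinguished vertex $v_{i}$ followed by exhaustive applications of \textbf{Step 1}. So the plan is to track how the integers change in such a round and to check at the end that (a) no edge has received two integers and (b) at the new distinguished vertex $v_{j}$ the conditions \textbf{(*)$'$} and \textbf{(**)$'$} hold. Throughout, the only local pictures possible at a valency three vertex of $G^{*}$ are the types~(1)--(8) of Figure~\ref{edge pattern}. In $G^{*}_{e}$ these give: valency three vertices all of whose incident edges are essential, which are ``splits'' of type~(1) (one incident edge to the left, two to the right) or ``merges'' of type~(5) (two to the left, one to the right); valency two vertices of $G^{*}_{e}$ coming from types~(3) and~(7) (one incident edge to the left, one to the right, the third $G^{*}$-edge being inessential); and vertices at which two essential edges are born (from types~(2),~(6), or an essential center), both incident edges lying to the right.

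Since \textbf{Step 1} has been applied as far as possible, $v_{i}$ is not a valency two vertex of $G^{*}_{e}$ with one assigned and one unassigned incident edge; hence $v_{i}$ is a split, a merge, or a birth vertex, and in each case all edges with $f^{*}$-value less than $f^{*}(v_{i})$ are already assigned. I would split according to \textbf{(*)}: if $v_{i}$ satisfies condition~(1), so that all of $L_{i}$ carries the single value $n$, then \textbf{Step 2} attaches $n+1$ to the unassigned edge(s) at $v_{i}$; if $v_{i}$ satisfies condition~(2), so that $L_{i}$ carries the consecutive values $n-1$ and $n$, then \textbf{Step 2} attaches $n$. A short direct check of each of these local situations shows that the set of values met by a vertical line just to the right of $v_{i}$ is again either a single integer or a pair of consecutive integers, and that the maximal value never decreases; this is exactly where the ``$n+1$ versus $n$'' rule of \textbf{Step 2} matches the two alternatives of \textbf{(*)}.

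Next I would follow the propagation produced by \textbf{Step 1}: starting from the edge(s) just assigned, it pushes the new value rightward, unchanged, across valency two vertices of $G^{*}_{e}$ (the types~(3) and~(7) above), and halts at the first vertex where it cannot continue --- a split or merge of $G^{*}_{e}$ with an unassigned incident edge, a birth vertex, or the region assigned in earlier rounds. That vertex is $v_{j}$, and every edge with $f^{*}$-value less than $f^{*}(v_{j})$ is now assigned. To see that no edge gets a second integer, and that $L_{j}$ again consists of one integer or two consecutive integers, I would invoke \textbf{(**)}: its clauses~(i) and~(ii) say precisely that the edges already assigned with $f^{*}$-value exceeding $f^{*}(v_{i})$ carry only the values $n-1$ and $n$, and that each of them is joined back to the level $f^{*}(v_{i})-\epsilon$ by a path lying in edges of one of those two values. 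Hence the \textbf{Step 1} front, carrying $n$ or $n+1$, either stops before meeting those edges or meets them only along edges of matching value, so the assignment remains single valued; and the clauses \textbf{(i)$'$} and \textbf{(ii)$'$} at $v_{j}$ follow because \textbf{Step 1} only enlarges the assigned region to the right along constant value edges, so a path realizing clause~(ii) of \textbf{(**)} for $v_{i}$ extends to one realizing the corresponding clause of \textbf{(**)$'$} for $v_{j}$, and likewise for clause~(i).

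The step I expect to be the main obstacle is this last verification of \textbf{(**)$'$}, and within it the connectivity clause \textbf{(ii)$'$}. The difficulty is that the assigned part of $G^{*}_{e}$ is not bounded by a single vertical line: a long chain of valency two vertices can carry a \textbf{Step 1} value far to the right, past the $f^{*}$-levels of vertices that are only processed in later rounds, so one must keep track of this jagged frontier and show that every newly assigned edge remains connected, through edges of one fixed value, back to the level $f^{*}(v_{j})-\epsilon$. Everything else is a finite bookkeeping check: for each allowed local picture at $v_{i}$ and at $v_{j}$, crossed with the two alternatives of \textbf{(*)}, one confirms directly that the invariant survives, the $n+1$/$n$ rule of \textbf{Step 2} being exactly what keeps the value set to two consecutive integers and prevents it from spreading wider.
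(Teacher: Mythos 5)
Your proposal follows essentially the same route as the paper: induct on rounds of Step~2 followed by exhaustive Step~1, split on whether $v_i$ satisfies (*)-(1) or (*)-(2) and on whether previously assigned edges already extend past the level of $v_i$ (the paper's type~(a)/type~(b) dichotomy, your ``jagged frontier''), and use (**)-(i),(ii) to ensure the newly propagated value meets the earlier frontier only along edges of matching value. The paper simply carries out the deferred bookkeeping explicitly as Cases 1-1 through 2-2-2 (including the subtlety, covered by your ``front stops before meeting those edges'' clause, that the new value $n+1$ can die out on a closed loop before reaching the level of $v_j$), so your outline is correct and matches its structure.
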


\begin{proof} 
Let $v_{i}$ be as above. We say that $v_{i}$ is of type~(a) if at the stage just before applying the last Step~2, there does not exist an assigned edge of $G^{*}_{e}$ which contains a point $p$ such that $f^{*}(p)>f^{*}(v_{i})$. The vertex $v_{i}$ is said to be of type~(b) if it is not of type~(a). We say that $v_{j}$ is of type~(a) if at the current stage, there does not exist an assigned edge of $G^{*}_{e}$ which contains a point $q$ such that $f^{*}(q)>f^{*}(v_{j})$. The vertex $v_{j}$ is said to be of type~(b) if it is not of type~(a).

We divide the proof into the following cases. 

\vspace{4mm}

\noindent
\textbf{Case 1.} The vertex $v_{i}$ is of type~(a). 

In this case, we note that $f^{*-1}(f^{*}(v_{i}))\cap$(the union of the assigned edges)$=v_{i}$, and that the valency of $v_{i}$ is three. 

\medskip
\noindent
\textbf{Case 1-1.} The vertex $v_{i}$ satisfies the above condition (*)-(1). 

\begin{figure}[htbp]
 \begin{center}
  \includegraphics[width=90mm]{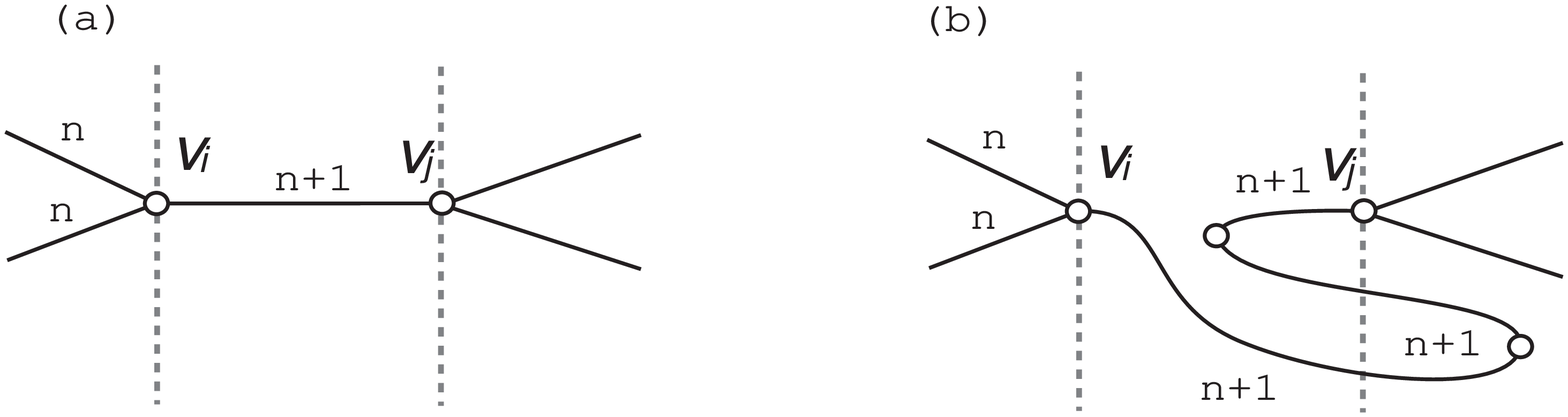}
 \end{center}
 \caption{}
 \label{}
\end{figure}

\vspace{3mm}

In this case, it is clear that we have (*)$'$-(1)$'$ with $m=n+1$. Suppose $v_{j}$ is of type~(a), then it is clear that (**)$'$ holds. Suppose that $v_{j}$ is of type~(b). 
Since the last assigning process is Step 1, all of the assigned edges containing a point $q$ with $f^{*}(q)>f^{*}(v_{i})$ are assigned $n+1$. This implies (**)$'$ holds. We note that in this case, the all of the integers introduces in the process is $n+1$, and there is no possibility of multiple assignment.

\medskip
\noindent
\textbf{Case 1-2.} The vertex $v_{i}$ satisfies the above condition (*)-(2). 

\vspace{-5mm}

\begin{figure}[htbp]
 \begin{center}
  \includegraphics[width=90mm]{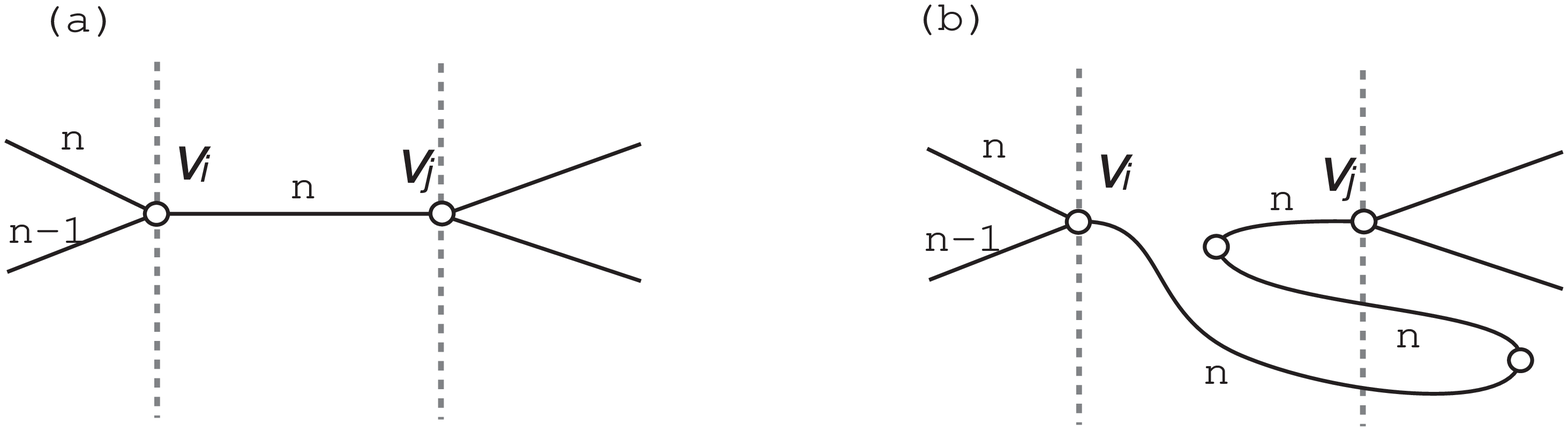}
 \end{center}
 \caption{}
 \label{}
\end{figure}

\vspace{-3.5mm}

In this case, we see that two edges bind into one edge at $v_{i}$, where an edge is assigned $n$ and the other is assigned $n-1$. 
We note that the same arguments as in Case 1-1 works, where the new integer is $n$. 

\medskip

\noindent
\textbf{Case 2.} The vertex $v_{i}$ is of type~(b).

\medskip
\noindent
\textbf{Case 2-1.} The vertex $v_{i}$ satisfies the above condition (*)-(1). 

In this case, the set of the integers assigned to the edges of $G^{*}_{e}$ each of which contains a point $r$ with $f^{*}(r)=f^{*}(v_{i})+\epsilon$ consists of \{$n$, $n+1$\}. (Note that each edge is assigned at most one integer.) Since the assignment at the stage before applying the last Step~2 satisfies (**)-(i), the current assignment satisfies (*)$'$-(1)$'$ or (*)$'$-(2)$'$. Now we will see (**)$'$ holds. 

\medskip

\noindent
\textbf{Case 2-1-1.} The vertex $v_{j}$ satisfies (*)$'$-(1)$'$ with $m=n+1$. 

\vspace{-3mm}

\begin{figure}[htbp]
 \begin{center}
  \includegraphics[width=90mm]{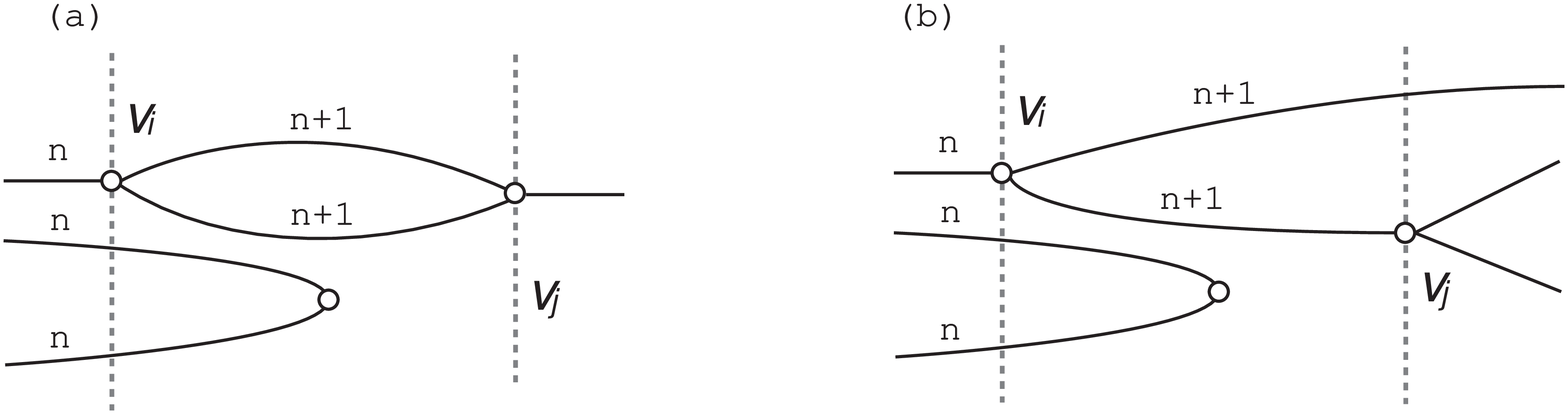}
 \end{center}
 \caption{}
 \label{}
\end{figure}

In this case, we note that at the stage before applying the last Step~2, all of the assigned edges containing a point $r'$ with $f^{*}(r')>f^{*}(v_{i})$ are assigned $n$, and the integer introduced in the last Step~2 and Step~1 is $n+1$. Suppose $v_{j}$ is of type~(a), then it is clear that (**)$'$ holds (see Fig. 18(a)). Suppose that $v_{j}$ is of type~(b) (see Fig. 18(b)). Since $v_{i}$ satisfies (**)-(ii), (**)$'$-(i)$'$ holds with $m'=n+1$. Since the last assigning process is Step~1, we see that (**)$'$-(ii)$'$ holds.

\medskip
\noindent
\textbf{Case 2-1-2.} The vertex $v_{j}$ satisfies (*)$'$-(1)$'$ with $m=n$. 

\begin{figure}[htbp]
 \begin{center}
  \includegraphics[width=90mm]{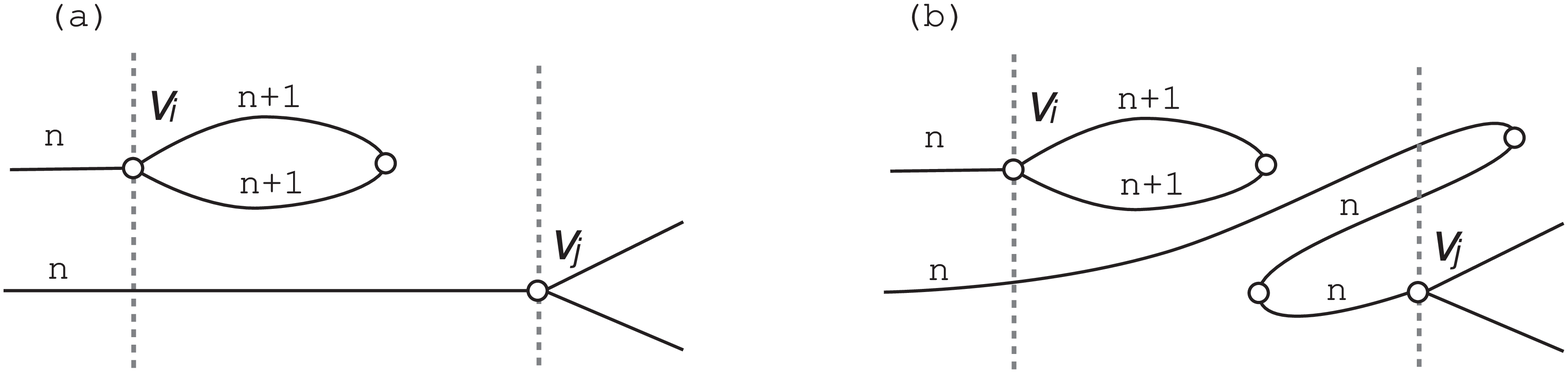}
 \end{center}
 \caption{}
 \label{}
\end{figure}

In this case, we note that at the stage before applying the last Step~2, all of the assigned edges containing a point $r'$ with $f^{*}(r')>f^{*}(v_{i})$ are assigned $n$, and the integer introduced in the last Step~2 and Step~1 is $n+1$. Suppose $v_{j}$ is of type~(a), then it is clear that (**)$'$ holds (see Fig. 19(a)). Suppose that $v_{j}$ is of type~(b). The condition of this case (Case~2-1-2) implies that the union of the edges assigned $n+1$ in these steps is a closed curve lying in the level less than $f^{*}(v_{j})$ (see Fig. 19(b)). Hence by (**)-(i), we see that $v_{j}$ satisfies (**)$'$-(i)$'$, and by (**)-(ii), we see that $v_{j}$ satisfies (**)$'$-(ii)$'$. 

\medskip
\noindent
\textbf{Case 2-1-3.} The vertex $v_{j}$ satisfies (*)$'$-(2)$'$ with $m=n+1$. 

\vspace{3mm}

\begin{figure}[htbp]
 \begin{center}
  \includegraphics[width=90mm]{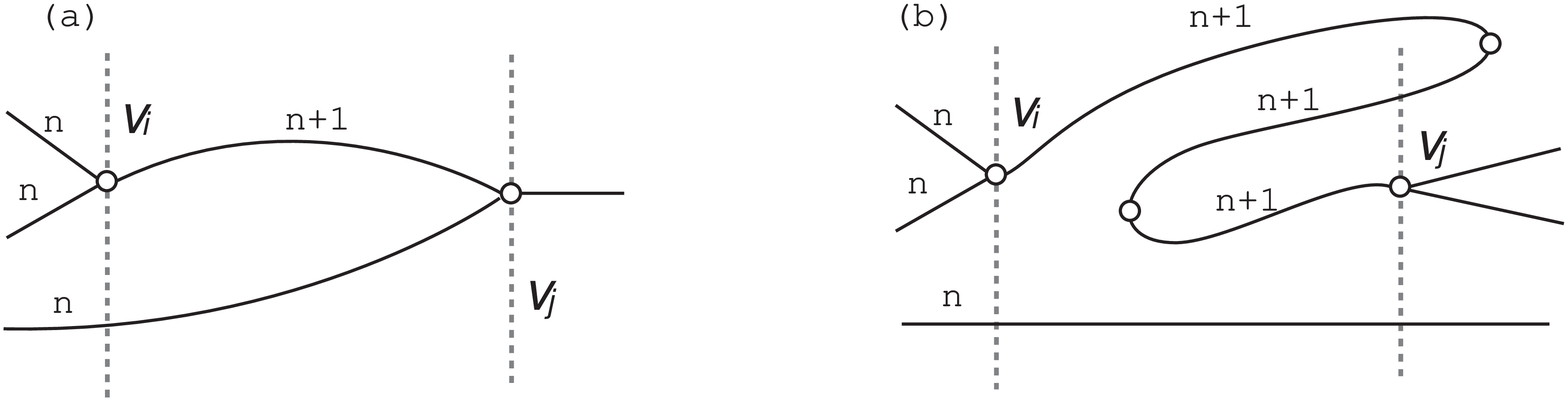}
 \end{center}
 \caption{}
 \label{}
\end{figure}

In this case, we note that at the stage before applying the last Step~2, all of the assigned edges containing a point $r'$ with $f^{*}(r')>f^{*}(v_{i})$ are assigned $n$, and the integer introduced in the last Step~2 and Step~1 is $n+1$. Suppose $v_{j}$ is of type~(a), then it is clear that (**)$'$ holds. Suppose that $v_{j}$ is of type~(b). By (**) and the fact that the integer introduced in the last Step~2 and Step~1 is $n+1$, we see that $v_{j}$ satisfies (**)$'$. 

\medskip

\noindent
\textbf{Case 2-2.} The vertex $v_{i}$ satisfies the above condition (*)-(2). 

In this case, the set of the integers assigned to each edge of $G^{*}_{e}$ containing a point $r$ with $f^{*}(r)=f^{*}(v_{i})+\epsilon$ is either \{$n$\} or \{$n-1$, $n$\}. (Note that each edge is assigned at most one integer.)

\medskip
\noindent
\textbf{Case 2-2-1.} The set of the integers assigned to each edge of $G^{*}_{e}$ containing a point $r$ with $f^{*}(r)=f^{*}(v_{i})+\epsilon$ is \{$n$\}.

\vspace{3mm}

\begin{figure}[htbp]
 \begin{center}
  \includegraphics[width=90mm]{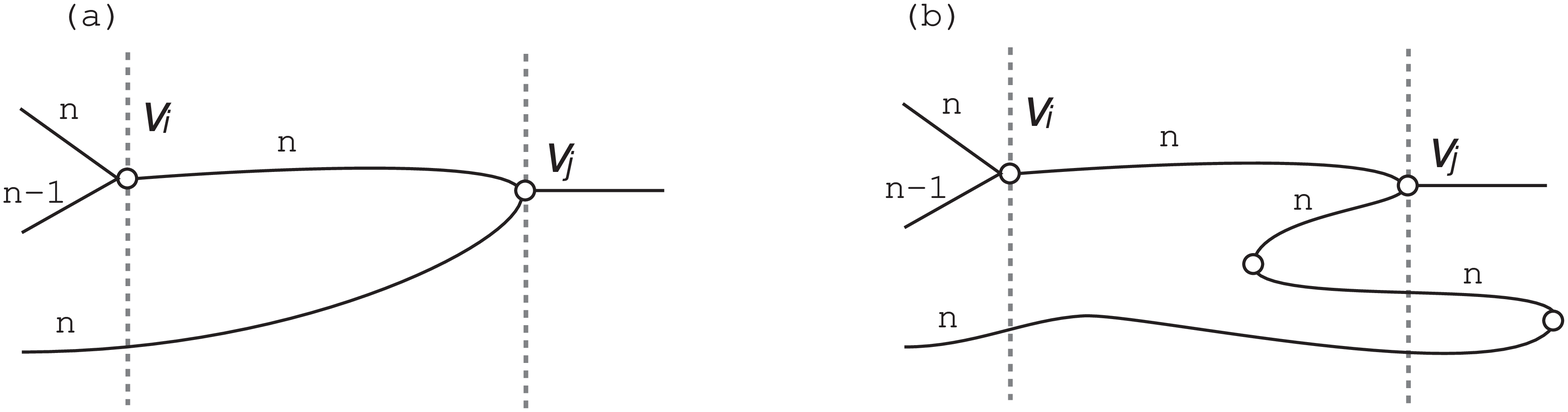}
 \end{center}
 \caption{}
 \label{}
\end{figure}

In this case, by (**), we see that in the current assignment, each edge of $G^{*}_{e}$ containing a point $r'$ with $f^{*}(r')>f^{*}(v_{i})$ is assigned $n$. Suppose $v_{j}$ is of type~(a), then it is clear that (**)$'$ holds (see Fig. 21(a)). Suppose that $v_{j}$ is of type~(b) (see Fig. 21(b)). By (**) and the fact that the integer introduced in the last Step~2, and Step~1 is $n$, we see that $v_{j}$ satisfies (**)$'$. 

\medskip
\noindent
\textbf{Case 2-2-2.} The set of the integers assigned to each edge of $G^{*}_{e}$ containing a point $r$ with $f^{*}(r)=f^{*}(v_{i})+\epsilon$ is \{$n-1, n$\}.

In this case, by using the argument as in Case~2-1, we see that $v_{j}$ satisfies (*)$'$-(1)$'$ or (*)$'$-(2)$'$, and (**)$'$. 
This completes the proof of the lemma. 
\end{proof}

By Lemma \ref{claim}, we can apply Step~2 and Step~1 again and repeat the procedure until all the edges of $G^{*}_{e}$ assigned. 

\begin{figure}[htbp]
 \begin{center}
  \includegraphics[width=58mm]{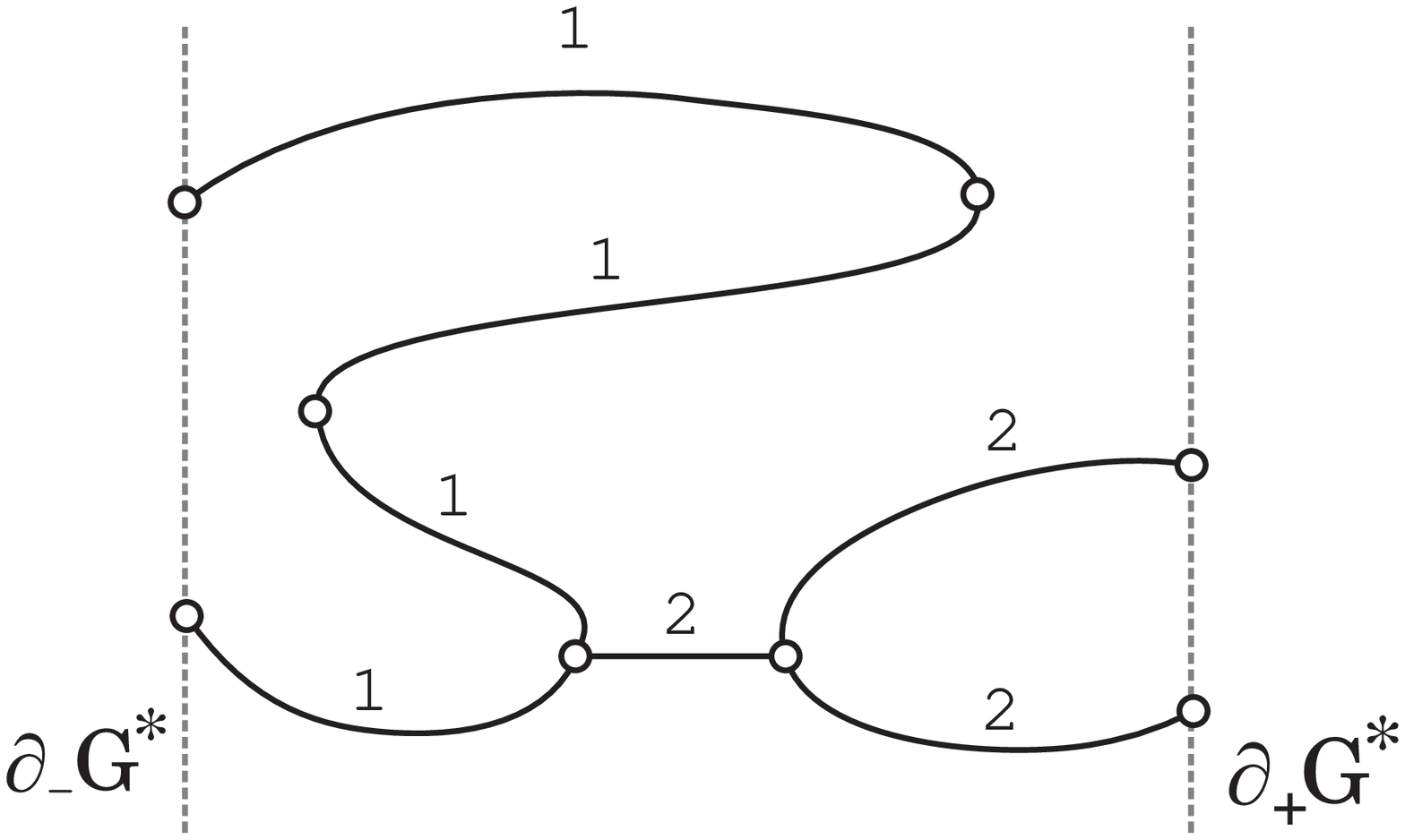}
 \end{center}
 \caption{}
 \label{fig:three}
\end{figure}

Recall that $v_{1},\dots, v_{k}$ are the vertices of $G^{*}_{e}$ which are not $\partial$-vertices such that $f^{*}(v_{1}) < f^{*}(v_{2}) < \dots <f^{*}(v_{k})$. 
Let $r_{0} = s_{0}- \epsilon$ and $r_{k+2} = s_{1} + \epsilon$. Then we fix levels $r_{1},\dots, r_{k+1}$ such that $s_{0} < r_{1} < f^{*}(v_{1})$, $f^{*}(v_{i}) < r_{i+1} < f^{*}(v_{i+1})$ $(i = 1,\dots, k-1)$, and $f^{*}(v_{k}) < r_{k+1} < s_{1}$. We consider the system of simple closed curves in $P_{r_{i}} \cap Q_{t}$ on $P_{r_{i}}$. Since there is a continuous deformation from $P_{r_{i}}$ to $P_{r_{i+1}}$, that is the sweep-out, we may regard $P_{r_{1}} \cap Q_{t}$,\dots, $P_{r_{k+1}} \cap Q_{t}$ are systems of simple closed curves on $P$. Recall that $(r_{0},t)$ is contained in a region labelled $A$. Let $c_{0}$ be the simple closed curve in $P_{r_{0}}\cap Q_{t}$ which bounds a meridian disk in $A$, and let $c_{*}$ be an essential simple closed curve in $P$ corresponding to an edge $e_{*}$ of $G^{*}_{e}$ (that is, $c_{*}$ is a component of $P_{r_{i}}\cap{Q_{t}}$ for some $i$).

\begin{lemma}
$d_{P}(c_{0},c_{*})$ is at most the integer assigned to $e_{*}$.
\end{lemma}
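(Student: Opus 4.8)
The plan is to prove, by induction along the assigning procedure of Section~7, the following statement: \emph{whenever an edge $e$ of $G^{*}_{e}$ has been assigned an integer $m$ and $c_{e}$ denotes the essential simple closed curve on $P$ determined (up to isotopy, via the sweep-out $f$) by a point of $e$, one has $d_{P}(c_{0},c_{e})\le m$.} The Lemma is then the case $e=e_{*}$.

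\smallskip
\noindent\textbf{Geometric input.} Since every $s\in(s_{0}+\epsilon,s_{1}-\epsilon)$ gives a point $(s,t)\in R_{\phi}$, an innermost disk argument shows that a component of $P_{s}\cap Q_{t}$ is essential on $P_{s}$ if and only if it is essential on $Q_{t}$: if one were essential on $P_{s}$ and inessential on $Q_{t}$ (or conversely), cutting an innermost disk off $Q_{t}$ (resp. off $P_{s}$) would place it in $C_{A}$, $C_{B}$, $C_{X}$ or $C_{Y}$ and label the region. Next, since $I\times\{t\}$ is generic, crossing one edge of the graphic corresponds to passing a single critical point $p$ of $f|_{Q_{t}}$, and the change of $P_{s}\cap Q_{t}$ is supported near $p$ in $Q_{t}$: at a saddle the curves just below and just above $p$ cobound a pair of pants in $Q_{t}$ and agree away from $p$, while at a center an inessential (on $Q_{t}$, hence on $P_{s}$) curve is born or dies. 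Transporting along $f$, I get on $P$: (i) any two components of one $P_{s}\cap Q_{t}$ are disjoint or isotopic on $P$; (ii) the essential-on-$P$ curves of $P_{s}\cap Q_{t}$ change only as $s$ passes some $f^{*}(v_{i})$, and at $v_{i}$ they change by one pair-of-pants move, so an essential curve just above $v_{i}$ not adjacent to $v_{i}$ is isotopic on $P$ to one just below, and an essential curve adjacent to $v_{i}$ from above is disjoint on $P$ from every essential curve adjacent to $v_{i}$ from below.

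\smallskip
\noindent\textbf{Base case and inductive step.} For Step~0: an edge $e$ adjacent to $\partial_{-}G^{*}_{e}$ is assigned $1$ and $c_{e}$ is a component of $f^{-1}(s_{0}+\epsilon)\cap Q_{t}$; between the levels $s_{0}-\epsilon$ and $s_{0}+\epsilon$ only the critical point over $(s_{0},t)$ is crossed, so (as $c_{0}$ is essential, hence untouched by a center, and disjoint on $P$ from the output of a saddle in which it participates) $c_{0}$ and $c_{e}$ are disjoint or isotopic on $P$ and $d_{P}(c_{0},c_{e})\le 1$. When Step~1 of the procedure propagates $m$ from an assigned edge $e$ across a valency-two vertex $v$ of $G^{*}_{e}$ to $e'$, the third edge of $v$ in $G^{*}$ is inessential with curve $d$ inessential on $Q_{t}$, hence on $P$; the pair of pants cut out of $P$ at the saddle over $v$ has boundary $c_{e},c_{e'},d$, and as $c_{e},c_{e'}$ are essential while $d$ bounds a disk $\Delta$ on $P$, the disk $\Delta$ lies on the side of $d$ away from this pair of pants, so capping off produces an annulus bounded by $c_{e}\cup c_{e'}$; thus $c_{e'}$ is isotopic on $P$ to $c_{e}$ and $d_{P}(c_{0},c_{e'})=d_{P}(c_{0},c_{e})\le m$. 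When Step~2 assigns $\mu$ to the curve(s) $c'$ adjacent to $v_{i}$ from above, note $L_{i}$ is nonempty (a point of $R_{\phi}$ carries a curve essential on both surfaces) and $v_{i}$ is, in $G^{*}$, the splitting of an essential curve, the merging of two essential curves, or an inessential curve becoming two essential curves; in each case $c'$ is disjoint on $P$ from some $\tilde c\in L_{i}$ — either a curve of $L_{i}$ participating in the move (it cobounds a pair of pants with $c'$) or a curve of $L_{i}$ not adjacent to $v_{i}$ (which persists past $v_{i}$ to the level of $c'$). If $L_{i}$ satisfies condition~(1), so $\mu=n+1$ and all of $L_{i}$ is assigned $n$, then $d_{P}(c_{0},c')\le d_{P}(c_{0},\tilde c)+1\le n+1=\mu$. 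If $L_{i}$ satisfies condition~(2), so $\mu=n$ and $L_{i}$ carries the integers $n-1,n$, then inspecting the three cases shows $\tilde c$ may be taken assigned $n-1$: for a merge or split, either a participating edge is assigned $n-1$ (and $c'$ is disjoint from it) or all participating edges are assigned $n$, forcing the $(n-1)$-edge of $L_{i}$ to be one that persists past $v_{i}$; for an inessential-to-essential vertex all of $L_{i}$ persists past $v_{i}$, so any $(n-1)$-edge of $L_{i}$ works; hence $d_{P}(c_{0},c')\le(n-1)+1=n=\mu$. The ensuing applications of Step~1 again only propagate integers across isotopies, and by Lemma~\ref{claim} the procedure terminates with each edge assigned exactly one integer, so the invariant holds for all edges; in particular $d_{P}(c_{0},c_{*})$ is at most the integer assigned to $e_{*}$.

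\smallskip
\noindent\textbf{Main obstacle.} The delicate point is the case analysis of Step~2 under condition~(2): one must use the structural conditions (*) and (**) maintained by the procedure (Lemma~\ref{claim}) to guarantee that among the edges of $L_{i}$ there is one assigned $n-1$ which is genuinely disjoint on $P$ from the newly labelled curve $c'$. Because disjointness forces a choice between a curve participating in the saddle at $v_{i}$ and a bystander curve that persists past $v_{i}$, it is precisely parts (i), (ii) of condition (**) that make the correct choice available in each configuration. A secondary technical point is the localization claim — that the surgery band accompanying a critical point of $f|_{Q_{t}}$ is supported near that point — which is what ensures the essential-on-$P$ curves change only at the $v_{i}$ and only by a pair-of-pants move.
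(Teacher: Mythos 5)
Your proof is correct and follows essentially the same route as the paper's: induction along the assignment procedure, with Step~0 giving distance at most $1$, Step~1 preserving the bound because the two curves are isotopic on $P$, and Step~2 increasing it by at most $1$ via disjointness. The only divergences are minor: in Step~1 the paper caps off the inessential boundary component inside $Q_{t}$ to get an annulus and pushes it into the product $P\times[s_{0}+\epsilon,s_{1}-\epsilon]$ to conclude isotopy on $P$, whereas you cap off on $P$ directly; and your extra case analysis under condition~(2) of Step~2 is unnecessary, since \emph{every} curve at level $f^{*}(v_{i})-\epsilon$ is disjoint up to isotopy from \emph{every} curve at level $f^{*}(v_{i})+\epsilon$ (only one critical point of $f|_{Q_{t}}$ is crossed), so one simply picks an edge of $L_{i}$ assigned $n-1$, exactly as the paper does.
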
 
 
\begin{proof}
We prove the lemma by following the assigning process for the edges of $G^{*}_{e}$ in this section. 

We consider the intersection $P_{r_{i}} \cap Q_{t}$. Let $C_{i}$ be the union of the components of $P_{r_{i}} \cap Q_{t}$ which are essential on $P_{r_{i}}$. Since $(s_{0},s_{1})\times\{t\}$ is an unlabelled interval, each component of $C_{i}$ is essential on $Q_{t}$. 
The isotopy class of $C_{i+1}$ in $P_{r_{i+1}}$ is obtained from the isotopy class of $C_{i}$ in $P_{r_{i}}$ by a band move, or addition or deletion of a pair of simple closed curves which are parallel on $Q_{t}$. 
Hence we see that the distance between $c_{0}$ and each element of $C_{1}$ is at most 1. 
This observation represents the assignment of integer 1 in Step 0.

Then we consider about Step 1. Suppose that there is a valency two vertex $v_{i}$ adjacent to edges $e_{l}, e_{l'}$ such that $e_{l}$ has already been assigned and $e_{l'}$ has not been assigned yet. Let $c_{l}$ (resp. $c_{l'}$) be the simple closed curve corresponding to $e_{l}$ (resp. $e_{l'}$). Note that $c_{l}, c_{l'}$ are pairwise parallel essential simple closed curves on $Q_{t}$. Hence $c_{l}\cup c_{l'}$ bounds an annulus, say $\mathcal{A}$ in $Q_{t}$. 

\medskip

\noindent
\textbf{Claim.} The simple closed curves $c_{l}, c_{l'}$ are isotopic on $P$. 

\vspace{2mm}
\noindent
\textit{Proof.} Since $[s_{0}+\epsilon,s_{1}-\epsilon]\times\{t\}$ goes though unlabelled regions, each component of int$\mathcal{A}\cap{P_{s_{0}+\epsilon}}$ (resp. int$\mathcal{A}\cap{P_{s_{1}-\epsilon}}$) is a simple closed curve that is inessential in both $\mathcal{A}$ and $P_{s_{0}+\epsilon}$ (resp. $\mathcal{A}$ and $P_{s_{1}-\epsilon}$). By using innermost disk arguments if necessary, we may suppose $\mathcal{A}$ is completely contained in $P\times[s_{0}+\epsilon,s_{1}-\epsilon]$. This annulus gives a free homotopy from $c_{l}$ to $c_{l'}$ on $P$. This show that $c_{l}$ and $c_{l'}$ are isotopic on $P$.

\vspace{4mm}
\noindent
The above claim shows that the assigning rule in Step 1 is natural. 

Now we consider about Step 2. Let $c_{r}$ be the simple closed curve corresponding to the point $r$ with $f^{*}(r)=f^{*}(v_{i})-\epsilon$ and $c_{l}$ be the simple closed curve corresponding to the point $l$ with $f^{*}(l)=f^{*}(v_{i})+\epsilon$. Recall that the isotopy class of $C_{i+1}$ is obtained from the isotopy class of $C_{i}$ by a band move, or addition or deletion of a pair of simple closed curves which are parallel on $Q_{t}$. Hence $c_{l}$ is ambient isotopic to simple closed curves disjoint from $c_{r}$. 

Suppose that $v_{i}$ satisfies the condition~(*)-(1) in this section. Since an edge containing the point $x$ is assigned $n$, $d_{P}(c_{0},c_{x})\leq{n}$. Note that $c_{l}$ is disjoint from $c_{r}$. Hence $d_{P}(c_{0},c_{l})\leq{n+1}$. Recall that $n+1$ is the number assigned to $e_{l}$. 

Suppose that $v_{i}$ satisfies the condition~(*)-(2) in this section. We may suppose that the edge containing $r$ is assigned $n-1$, hence $d_{P}(c_{0},c_{r})\leq{n-1}$. Note that $c_{l}$ is disjoint from $c_{r}$. Hence $d_{P}(c_{0},c_{l})\leq{n}$. Recall that $n$ is the number assigned to $e_{l}$. 
This completes the proof of the lemma. 
\end{proof}

Now, we give the proof of the next theorem. 

\begin{theorem}
Let $P, Q$ and $G^{*}_{e}$ be as above. Let $n$ be the minimum of the integers assigned to the edges adjacent to $\partial_{+}G^{*}_{e}$. 
Then the distance $d(P)$ is at most $n + 1$.
\label{main}
\end{theorem}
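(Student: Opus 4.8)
The plan is to exhibit one curve in $\mathcal{M}(A)$ and one curve in $\mathcal{M}(B)$ whose distance on $P$ is at most $n+1$; since $d(P)=d_{P}(\mathcal{M}(A),\mathcal{M}(B))$, this proves the theorem. For the first curve we take $c_{0}$, which by construction bounds a meridian disk of $A$, so $c_{0}\in\mathcal{M}(A)$. For the second, recall that $(r_{k+2},t)=(s_{1}+\epsilon,t)\in R_{B}$, so $C_{B}$ is non-empty there; fix $\beta\in C_{B}$. By definition $\beta$ is essential on $P_{s_{1}+\epsilon}$ and bounds a disk in $Q_{t}$ whose collar lies in $B_{s_{1}+\epsilon}$, and, exactly as in the construction of $c_{0}$ and using that $M$ is irreducible (it admits a strongly irreducible Heegaard splitting), an innermost-disk isotopy turns this disk into a meridian disk of $B$; hence $\beta\in\mathcal{M}(B)$.

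Next I would single out the relevant edge of $G^{*}_{e}$. Let $e_{+}$ be an edge adjacent to $\partial_{+}G^{*}_{e}$ whose assigned integer is $n$, and let $w$ be the $\partial_{+}$-vertex it meets. Choose the level $r_{k+1}$ (fixed above) to lie in $(f^{*}(v_{k}),s_{1}-\epsilon)$; this is allowed since $f^{*}(v_{k})<s_{1}-\epsilon$. The endpoint of $e_{+}$ other than $w$ is either a $\partial_{-}$-vertex or one of $v_{1},\dots,v_{k}$, so it sits at a level $\le f^{*}(v_{k})$; hence $e_{+}$ meets the level $r_{k+1}$, and we let $c$ be the corresponding component of $P_{r_{k+1}}\cap Q_{t}$. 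Then $c$ is essential on $Q_{t}$ (as $e_{+}$ is an essential edge) and $c$ is also essential on $P$: in the unlabelled region containing $(r_{k+1},t)$ no component of $P_{r_{k+1}}\cap Q_{t}$ can be essential on $Q_{t}$ while bounding a disk on $P_{r_{k+1}}$, for an innermost such curve would, through the definitions of $C_{X}$ and $C_{Y}$, give the region a label $X$ or $Y$. Applying the lemma that $d_{P}(c_{0},c_{*})$ is at most the integer assigned to $e_{*}$, with $e_{*}=e_{+}$ and $c_{*}=c$, we obtain $d_{P}(c_{0},c)\le n$.

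It then remains to prove $d_{P}(c,\beta)\le 1$. First, $c$ is isotopic on $P$ to the component $c_{w}$ of $P_{s_{1}-\epsilon}\cap Q_{t}$ corresponding to $w$: along the sub-arc of $e_{+}$ joining level $r_{k+1}$ to level $s_{1}-\epsilon$ the associated level loop of $f|_{Q_{t}}$ is involved in no saddle or center, for such an event would place a vertex of $G^{*}_{e}$ at a level strictly between $f^{*}(v_{k})$ and $s_{1}-\epsilon$, contradicting the choice of $v_{1},\dots,v_{k}$; hence the isotopy class of that loop on the level surface is constant over this range. Second, after shrinking $\epsilon$ we may assume $(s_{1},t)$ is the only point of the graphic on $[s_{1}-\epsilon,s_{1}+\epsilon]\times\{t\}$, and this point is a saddle, not a center: a center would only add a disk-bounding curve that is inessential on both surfaces and so would leave the set of essential-on-$P$ components of $P_{s}\cap Q_{t}$ unchanged, whereas at $(s_{1}-\epsilon,t)\in R_{\phi}$ no essential-on-$P$ component bounds a disk in $Q_{t}$ (otherwise an innermost one would lie in $C_{A}$ or $C_{B}$), while at $(s_{1}+\epsilon,t)$ the non-empty set $C_{B}$ provides one. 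Hence, just as in the proof of the previous lemma, the system of components of $P_{s_{1}-\epsilon}\cap Q_{t}$ essential on $P$ and the system of components of $P_{s_{1}+\epsilon}\cap Q_{t}$ essential on $P$ differ, up to isotopy on $P$, by a single band move (or by the addition or deletion of a parallel pair). In every such case any curve of the first system is disjoint on $P$ from any curve of the second: a band sum $a\#b$ can be isotoped off both $a$ and $b$, while the remaining curves are either common to the two systems or lie together on a single level surface. Since $c_{w}$ belongs to the first system and $\beta$ to the second, $d_{P}(c_{w},\beta)\le 1$, and therefore
\[
d(P)\le d_{P}(c_{0},\beta)\le d_{P}(c_{0},c)+d_{P}(c,c_{w})+d_{P}(c_{w},\beta)\le n+0+1=n+1 .
\]

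The step I expect to be the main obstacle is this last one: showing that as $s$ passes through $s_{1}$ the essential-on-$P$ intersection curves change by exactly one band move, so that the distance increases by at most one. This relies on the careful choice of $\epsilon$ isolating the graphic point $(s_{1},t)$ on the horizontal arc, on the bookkeeping, already implicit in the proof of the previous lemma, of how $P_{s}\cap Q_{t}$ evolves across a saddle viewed as a system of isotopy classes on $P$, and on the fact that a $\partial_{+}$-vertex of $G^{*}_{e}$ does represent a curve of the essential-on-$P$ system at level $s_{1}-\epsilon$, for which one uses once more that in an unlabelled region essential-on-$Q_{t}$ implies essential-on-$P$.
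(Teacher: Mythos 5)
Your proposal is correct and follows essentially the same route as the paper: take $c_{0}\in\mathcal{M}(A)$ at level $r_{0}$, a curve $\beta\in\mathcal{M}(B)$ at level $r_{k+2}$, apply Lemma 7.2 to the edge adjacent to $\partial_{+}G^{*}_{e}$ carrying the integer $n$, and observe that the corresponding curve is disjoint (up to isotopy) from $\beta$. The only difference is that you spell out in detail the step the paper states in one line ("Note that $c'$ is disjoint from $c_{k+2}$"), which is a worthwhile elaboration but not a different argument.
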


\begin{proof}
Let $r_{0},\dots, r_{k+2}$ be as above. Recall that $P_{r_{0}} \cap Q_{t}$ contains a simple closed curve, say $c_{0}$ which bounds a meridian disk of $A$. On the other hand, $P_{r_{k+2}} \cap Q_{t}$ contains a simple closed curve, say $c_{k+2}$ which bounds a meridian disk of $B$. 
By Lemma 7.2, we see that $d(c_{0},c')\leq n$, for a component $c'$ of $P_{r_{k+1}}\cap{Q_{t}}$. Note that $c'$ is disjoint from $c_{k+2}$. Hence $d_{P}(c_{0},c_{k+2})\leq n+1$. This implies that $d(P)\leq n+1$. \\
\end{proof}

\section*{Acknowledgement}
I would like to thank Professor Tsuyoshi Kobayashi for many helpful advices and comments. I also thank the referee for careful reading of the first version of the paper.


\end{document}